\documentclass[11pt,reqno]{amsart}
\usepackage[T1]{fontenc}
\usepackage[utf8]{inputenc}
\usepackage{amsmath,amssymb,amsthm,mathtools}
\usepackage{enumitem}
\usepackage{booktabs}
\usepackage{array}
\usepackage{algorithm}
\usepackage{algpseudocode}
\usepackage[margin=1.15in]{geometry}
\usepackage{xcolor}
\usepackage[colorlinks=true,linkcolor=blue!50!black,citecolor=blue!50!black]{hyperref}

\theoremstyle{plain}
\newtheorem{theorem}{Theorem}[section]
\newtheorem{proposition}[theorem]{Proposition}
\newtheorem{lemma}[theorem]{Lemma}
\newtheorem{corollary}[theorem]{Corollary}

\theoremstyle{definition}
\newtheorem{definition}[theorem]{Definition}
\newtheorem{remark}[theorem]{Remark}
\newtheorem{example}[theorem]{Example}

\newcommand{\C}{\mathbb{C}}
\newcommand{\Z}{\mathbb{Z}}
\newcommand{\N}{\mathbb{N}}
\newcommand{\Q}{\mathbb{Q}}

\newcommand{\Gm}{\mathbb{G}_m}
\newcommand{\Ga}{\mathbb{G}_a}
\newcommand{\GL}{\operatorname{GL}}
\newcommand{\Gal}{\operatorname{Gal}}
\newcommand{\dGal}{\delta\text{-}\Gal}
\newcommand{\Aut}{\operatorname{Aut}}
\newcommand{\Sol}{\operatorname{Sol}}
\newcommand{\Tor}{\operatorname{Tor}}
\newcommand{\Hom}{\operatorname{Hom}}
\newcommand{\Ann}{\operatorname{Ann}}
\newcommand{\ord}{\operatorname{ord}}

\newcommand{\Seq}{\operatorname{Seq}}
\newcommand{\trdeg}{\operatorname{trdeg}}
\newcommand{\dtrdeg}{\delta\text{-}\trdeg}
\newcommand{\ddim}{\delta\text{-}\dim}
\newcommand{\A}{\mathcal{A}}
\newcommand{\B}{\mathcal{B}}

\newcommand{\Lc}{\mathcal{L}}
\newcommand{\Dc}{\mathcal{D}}
\newcommand{\V}{\mathbb{V}}
\newcommand{\Newt}{\operatorname{New}}

\begin{document}

\title[Linear cellular automata and difference Galois theory II]
{Generalized linear cellular automata in groups\\ and difference Galois theory II}

\author{David Bl\'azquez-Sanz}
\address{Universidad Nacional de Colombia -- Sede Medell\'in,
Carrera 65 No.\ 59A-110, Medell\'in, Antioquia, Colombia}
\email{dblazquezsa@unal.edu.co}

\author{Mario Alejandro Vergara Tapiero}
\address{Universidad Nacional de Colombia -- Sede Medell\'in,
Carrera 65 No.\ 59A-110, Medell\'in, Antioquia, Colombia}
\email{mavergarat@unal.edu.co}

\subjclass[2020]{Primary 12H10; Secondary 12H05, 37B15, 39A05, 33F10, 05A15, 68W30}
\keywords{cellular automata; difference Galois theory; parameterized Picard--Vessiot theory;
discrete Fourier transform; differential transcendence; holonomic sequences}

\begin{abstract}
In the first part of this series the $\sigma$-ring spanned by the periodic solutions of a
generalized linear cellular automaton in a discrete group was shown to be Hopf--Galois,
with pro-algebraic Galois group, while a Galois theory for the $\sigma$-ring spanned by the
finite support solutions was left open, with parameterized difference Galois theory
suggested as the tool. We apply that tool to automata over $\Z$. The Fourier
transform turns the automaton into a rank one equation
$\sigma(\hat x)=\hat\alpha\hat x$ over a field on which $\delta=z\,d/dz$ acts, and we
compute, and algorithmically decide, its parameterized Galois group: for genuine automata
with symbols rational in the time variable it is either $\Gm(\C)$ or $\Gm$, the dividing
line being separability of the symbol, $\hat\alpha(z,t)=\gamma(z)\psi(t)$. Finally, the
parameterized group in the time direction decides holonomy in the space direction; as a
corollary, the Stirling numbers of the first kind are not spatially holonomic.
\end{abstract}

\maketitle
\setcounter{tocdepth}{1}
\tableofcontents

\section{Introduction}\label{sec:intro}

A generalized linear cellular automaton in a discrete group $G$ is a system of linear
difference equations with one variable per element of $G$, invariant under the right
translation action, and with a transition rule of finite support. As explained in
\cite{BM}, such a system is a convolution equation
\begin{equation}\label{eq:conv}
\sigma(s)=a\star s,\qquad a\in C_c(G,k),
\end{equation}
where $k$ is a $\sigma$-field of sequences. 

If $G$ is finite, then Picard--Vessiot--Franke theory applies directly. If moreover $G$ is abelian, the Fourier transform that linearises convolution
is a gauge transformation that diagonalises the system and the Galois group
is realized inside a linear torus \cite[Thm.~12]{BM}. For infinite $G$
this is no longer true, and we should consider three different $\sigma$-rings of solutions: the ring $L$ spanned by all solutions, the ring $L_c$ spanned by the solutions of finite support, and the ring $L_{per}$ spanned by the periodic solutions. The periodic case was solved in
\cite{BM}: $L_{per}$ is the direct limit of the Picard--Vessiot extensions of the
finite rank automata induced on the finite quotients $G/H$, the extension $k\subseteq
L_{per}$ is Hopf--Galois, and its Galois group is the projective limit of the algebraic
Galois groups, hence pro-algebraic \cite[Thm.~15]{BM}.
 
The finite support case can not be analysed in the framework of Picard--Vessiot--Franke theory.
Remark 5 of \cite{BM} states the problem and points at
the tool: \emph{``In the abelian case, the ring of Fourier polynomials is endowed of a
differential structure that allows to apply parameterized difference Galois theory.''} Here we
study in depth the case $G=\Z$. We do not construct a Galois theory for $L_c$ itself; what
we compute is the parameterized Galois group of the Fourier transformed equation
$\sigma(\hat x)=\hat\alpha\hat x$ over the $\sigma\delta$-field $K=\C(z)^{\rm diff}(t)$,
where $\delta=z\,d/dz$ acts on the frequency variable. We give the computation of the
$\delta$-Galois group of a rank one equation in general (Theorem \ref{thm:main}), which for
coefficients rational in $t$ leads to a trichotomy (Theorem \ref{thm:tricho}): the
$\delta$-Galois group is either finite, $\Gm(\C)$ or $\Gm$. For genuine automata the finite
case does not occur, and the reduction of the group to the constants is equivalent to the
separability of the symbol $\hat\alpha$ (Theorem \ref{thm:sep}).

Finally we show that the $\delta$-Galois in the time direction $t$ decides holonomy in the
space direction $n$. We say that an automaton is \emph{spatially holonomic} if the
coefficients $x_n(t)$ of its fundamental solution satisfy one linear recurrence in $n$ with
coefficients in $\C[n,t]$, valid for all $n$ and all $t$. This condition turns out again to
be equivalent to separability of the symbol, and gives the recurrence explicitly when it
holds (Theorem \ref{thm:holoequiv}). As an illustration, the array of unsigned Stirling
numbers of the first kind is not spatially holonomic, hence not P-finite in two variables
(Corollary \ref{cor:stirling}). This statement is treated as known in the symbolic
summation literature \cite{Kauers07,CKS09}; we have not found a published proof, although,
as we point out in Remark \ref{rmk:egf}, a short direct argument is available for this
particular array through its exponential generating function. The point of the Galois
theoretic approach, whose closest methodological precedent is \cite{BDR}, is that it gives
a criterion valid for the whole family of automata rather than an argument case by case.

\section{Generalized linear cellular automata in groups}\label{sec:auto}

Here we recall the setting of \cite[\S1]{BM}, in the special case of $G=\Z$.

\subsection{Automata and convolution equations}

Let $\Seq(\C)$ be the ring of complex sequences with componentwise operations and shift
$\sigma(s)(t)=s(t+1)$, and let
\[
\Seq_\infty(\C):=\Seq(\C)/\!\sim,\qquad
s\sim r\ \Longleftrightarrow\ s(t)=r(t)\ \text{for all large }t,
\]
be the ring of \emph{germs at infinity} of sequences. As any rational function has finitely many poles,
\[
\C(t)\hookrightarrow\Seq_\infty(\C),\qquad f\longmapsto\text{germ of }\big(f(t)\big)_{t},
\]
is a well defined embedding of $\sigma$-rings. The same applies to $\kappa(t)\subseteq
\Seq_\infty(\kappa)$ for any $\sigma$-field $\kappa$. We fix a $\sigma$-subfield
$\kappa\subseteq\Seq_\infty(\C)$ with $\kappa^\sigma=\C$. All identities between sequences
below are identities of germs, that is, they are asserted for all large $t$.

\begin{definition}[\cite{BM}]
A \emph{generalized linear cellular automaton} in $\Z$ with coefficients in $\kappa$ and radius $r$ is the system
\begin{equation}\label{eq:auto}
s_{n}(t+1)=\sum_{h= -r}^{r}a_{h}(t)\,s_{n-h}(t),\qquad n\in\Z,\ a_h\in \kappa,
\end{equation}
equivalently the convolution equation $\sigma(s)=a\star s$ with $a\in C_c(\Z,\kappa)$ supported in $\{-r, \ldots, r\}$.
\end{definition}

Following \cite[\S1.4]{BM} we write $\Sol(a,R)$, $\Sol_c(a,R)$ and $\Sol_{T\text{-}per}(a,R)$
for the spaces of all solutions, of finite support solutions, and of $T$-periodic
solutions with values in a $\sigma$-ring $R$, and
\[
L=\kappa[s_n],\qquad L_c=\kappa[s_n:\ s\in\Sol_c],\qquad
L_{per}=\bigcup_{T\geq1}L_{T\text{-}per}
\]
for the $\sigma$-rings they span. The \emph{fundamental solution} $f$ is the one with
$f_{n}(t_0)=\delta_{n,0}$, where the base time $t_0$ is fixed in \eqref{eq:t0} below, and
the fundamental $T$-periodic solution is $f_T=f\star \sum_{k\in\Z} e_{kT}$ where $e_{n}$ stands for the mass function that maps $n$ to $1$ and any other element of $\Z$ to $0$.

\subsection{Fourier transform}

\subsubsection*{The circle}
The dual of $\Z$ is the circle $S^1$; a character is $\rho_\theta(n)=e^{in\theta}$. We
write $z=e^{i\theta}$ throughout, so that $S^1$ is seen as the unit circle in the
$z$-plane and the torsion subgroup of $\widehat\Z$ is the group $\mu_\infty$ of roots of
unity, identified with $\Q/\Z$ via $\zeta_q=e^{2\pi iq}$. For $f\in C_c(\Z,R)$ the Fourier
transform is
\[
\hat f=\sum_{n\in\Z}f_n\,z^{n}\ \in\ R[z^{\pm1}],
\]
a Laurent polynomial; for arbitrary $f\in C(\Z,R)$ it is a formal Fourier series, and the
inversion formula is $f_n=\frac1{2\pi}\int_0^{2\pi}e^{-in\theta}\hat f\,d\theta$.
The transform exchanges convolution and product and commutes with $\sigma$, so
\eqref{eq:auto} becomes the rank one equation
\begin{equation}\label{eq:rank1}
\sigma(\hat x)=\hat\alpha\,\hat x,\qquad
\hat\alpha =\sum_{h}a_h\,z^{h}\in \kappa[z^{\pm1}].
\end{equation}
We call $\hat\alpha$ the \emph{symbol} of the automaton and write
$$\mu=\ord_z\hat\alpha = \min\{h\colon a_h\neq 0\} \leq\nu=\deg_z\hat\alpha = \max\{h\in \mathbb Z\colon a_h \neq 0\},$$ so that the Newton polytope of $\hat\alpha$, as a Laurent polynomial in $z$, is the segment $\Newt(\hat\alpha)=[\mu,\nu]$.

\subsubsection*{Cyclic groups}
The dual of $\Z/T\Z$ is the group $\mu_T$ of $T$-th roots of unity; with
$\xi=e^{2\pi i/T}$ the transform is the Vandermonde matrix $(\xi^{nj})$ and
$\hat a_j=\sum_{n}\xi^{nj}a_n$. The convolution equation in $\Z/T\Z$ is thereby
decoupled into $T$ scalar equations $\sigma(\hat x_j)=\hat a_j\hat x_j$, solved by
$\kappa$-hypergeometric sequences \cite[Ex.~2]{BM}. Compatibility of the two transforms is the
commutative diagram of \cite[proof of Thm.~9]{BM}: the transform of the quotient automaton
on $\Z/T\Z$ is the restriction of $\hat\alpha$ to $\mu_T\subset S^1$.

\subsection{The group algebra}

Let
\[
\A=\C[\Z]=\C[z^{\pm1}]=\mathcal O(\Gm)
\]
be the group algebra of $\Z$. It is a principal ideal domain and a unique factorization domain; its units are exactly the
monomials $cz^{a}$. The two structures we use are the quotients
$\A/(z^T-1)\cong\prod_{\zeta^T=1}\C$, which are the coordinate rings of the torsion
subgroups $\mu_T$, and the fraction field $\C(z)$, the local ring of the generic point of
$\Gm$.

Since the coefficients of $\hat\alpha$ are germs, once representatives are chosen
(for $\kappa=\C(t)$, the values of the rational functions at integers),
$\hat\alpha(z,s)\in\C[z^{\pm1}]$ is defined for all large $s$, and it is nonzero for all large $s$ because a nonzero germ is
invertible. We therefore fix once and for all an integer
\begin{equation}\label{eq:t0}
t_0\in\N\ \text{ such that }\ \hat\alpha(z,s)\ \text{ is defined and nonzero for every }
s\geq t_0,
\end{equation}
and we base the fundamental solution at $t_0$. Its Fourier transform is
\[
\beta(z,t) =\prod_{s=t_0}^{t-1}\hat\alpha(z,s)\ \in\ \C[z^{\pm1}],\qquad t\geq t_0 .
\]
Note that $\beta(z,t)$ is a sequence of nonzero Laurent polynomials, and not necessarily
an element of $\kappa[z^{\pm1}]$. For $t>t_0$ its support is contained in
$\big[(t-t_0)\mu,(t-t_0)\nu\big]$, since degrees are additive in the domain
$\C[z^{\pm1}]$. If moreover the extreme coefficients satisfy $a_\mu(s)\,a_\nu(s)\neq0$ for
every $s\geq t_0$, both endpoints are attained; in general $a_\mu$ and $a_\nu$, being
nonzero germs, vanish at finitely many $s$ only, so that
\[
\ord_z\beta(z,t)-(t-t_0)\mu
\qquad\text{and}\qquad
(t-t_0)\nu-\deg_z\beta(z,t)
\]
are nonnegative and bounded independently of $t$; this asymptotic form is all we use.
Replacing $t_0$ by a larger $t_0'$ \emph{divides} $\beta$ by the Laurent polynomial
$\prod_{s=t_0}^{t_0'-1}\hat\alpha(z,s)\in\A$, which affects nothing in
\S\ref{sec:sd}, \S\ref{sec:algo} and \S\ref{sec:holo}. When $\hat\alpha(z,s)$ is defined and nonzero
for \emph{every} $s\in\N$ one has $t_0=0$, the usual normalization; we indicate the value
of $t_0$ whenever it is not $0$.

\begin{theorem}[\protect{\cite[Thm.~8]{BM}}]\label{thm:BM8}
The Fourier transform restricts to an isomorphism of $\A$-modules
\[
\Sol_c(a,L_c)\ \xrightarrow{\ \sim\ }\ \beta\cdot\A .
\]
That is, the space of finite support solutions is free of rank one over the group algebra,
generated by the fundamental solution.
\end{theorem}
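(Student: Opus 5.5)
The plan is to show, in order, that the Fourier transform is injective and $\A$-linear on $\Sol_c(a,L_c)$, that it sends $f$ to $\beta$, and that every finite support solution is an $\A$-multiple of $f$. A solution here is a family $s=(s_n)_{n\in\Z}$ of germs, with only finitely many $s_n$ nonzero, satisfying \eqref{eq:auto}.

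For linearity and injectivity, note that the $\A$-module structure on $\Sol_c$ is convolution by elements of $C_c(\Z,\C)$, which are $\sigma$-constants. These commute with $\sigma$ and with $a\star(\cdot)$, so $\Sol_c$ is an $\A$-module. The transform $s\mapsto\hat s=\sum_n s_nz^n$ turns convolution into multiplication, so it is $\A$-linear. It is injective because a Laurent polynomial determines its coefficients. The fundamental solution has finite support, since $f_n(t)$ vanishes outside $[(t-t_0)\mu,(t-t_0)\nu]$. Its transform is $\beta$ by the recursion $\hat f(t+1)=\hat\alpha(z,t)\hat f(t)$ with $\hat f(t_0)=1$. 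Hence $\beta\cdot\A$ lies in the image.

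Surjectivity onto $\beta\cdot\A$ is the key step. Let $s\in\Sol_c$ and put $p:=\hat s(t_0)\in R[z^{\pm1}]$, the value at the base time. One needs care about the meaning of "value at $t_0$": the entries are germs, not functions. I would therefore fix representatives and choose $t_1\ge t_0$ large enough that the finitely many relations defining $s$ as a solution hold for all $t\ge t_1$. Then $\hat s(t)=\prod_{u=t_1}^{t-1}\hat\alpha(z,u)\,\hat s(t_1)$. On the other hand, $\prod_{u=t_1}^{t-1}\hat\alpha(z,u)=\beta(z,t)/\beta(z,t_1)$, so
\[
\hat s(t)=\beta(z,t)\,\frac{\hat s(t_1)}{\beta(z,t_1)} .
\]
For this to be equivalent to $\hat s=\beta\cdot c$ with $c\in\A$, we need $\hat s(t_1)/\beta(z,t_1)$ to be a Laurent polynomial with constant coefficients, that is, $\sigma$-invariant in $R$.

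I expect this point to be the main obstacle: the quotient must be both in $\A$ and a $\sigma$-constant. I would handle it by working in the solution ring itself, where \cite{BM} defines $\Sol_c(a,L_c)$ so that solutions are determined by their initial data at $t_0$, with $\C$-valued initial data. With that normalization the initial value $s(t_0)$ is a finitely supported element $c\in C_c(\Z,\C)$. Uniqueness of solutions with given initial data then gives $s=f\star c$, since both sides solve \eqref{eq:auto} with the same value at $t_0$. Transforming yields $\hat s=\beta\,\hat c$ with $\hat c\in\A$.

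Freeness of rank one follows from the same computation. If $\beta c=0$ then $c=0$, because $\A$ is a domain and $\beta(z,t)\neq0$ by \eqref{eq:t0}. Hence $c\mapsto f\star c$ is an isomorphism $\A\to\Sol_c$. The only delicate step, which I would check against \cite[\S1.4]{BM}, is the identification of finite support solutions with $\C$-valued finitely supported initial data at $t_0$.
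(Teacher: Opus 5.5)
\textbf{Verdict.} There is a gap in how your proposal sets up the objects, not in the core computation. Your closing argument ($s(t_0)=c\in C_c(\Z,\C)$ determines $s$, hence $s=f\star c$ and $\hat s=\beta\,\hat c$) is the natural proof of the cited result; the paper gives no proof of its own, only the reference to \cite{BM}. But that argument is valid only in a framework your proposal never fixes, and the two points you leave open are exactly where the statement breaks.

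\textbf{The definition of finite support.} Your opening definition (only finitely many germs $s_n$ nonzero) is the wrong notion. For rule 90, $f_n(t)=\binom{t}{(t+n)/2}\neq0$ for every $t\geq|n|$ with $t\equiv n\pmod 2$. So every germ $f_n$ is nonzero, and $f\notin\Sol_c$ in your sense. More generally, if $\mu<\nu$ the $z$-width of $\hat s(t)$ grows linearly in $t$ unless $\hat s=0$, so your $\Sol_c$ is $0$. When you then say $f$ has finite support because $f_n(t)=0$ outside $[(t-t_0)\mu,(t-t_0)\nu]$, you are using the other notion: finite support of each configuration $s(t)$. Under that notion there are infinitely many nontrivial relations, and your choice of a uniform $t_1$ via ``finitely many relations'' is no longer justified.

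\textbf{The obstacle in the germ approach.} What you found is a genuine failure, not a point to be settled by consulting \cite{BM}. The quotient $\hat s(t_1)/\beta(z,t_1)$ is a ratio of two Laurent polynomials with complex coefficients: they are values at one fixed time, not elements of $R[z^{\pm1}]$. So it lies in $\C(z)$ automatically, and no $\sigma$-invariance issue arises. The only question is whether it lies in $\A$, and in general it does not.

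Indeed, $t\mapsto\prod_{u=t_1}^{t-1}\hat\alpha(z,u)=\beta(z,t)/\beta(z,t_1)$ is, as a germ, a solution with finitely supported configurations. It lies in $\beta\cdot\A$ only if $\beta(z,t_1)$ is a monomial. For rule 90 with $t_0=0$, $t_1=1$, it is the time-shifted fundamental solution $\beta/(z+z^{-1})$, which is not in $\beta\cdot\A$. At the level of germs (solutions satisfying the rule from some time on) one obtains $\bigcup_{t_1\geq t_0}\beta\,\beta(z,t_1)^{-1}\A$. This is strictly larger than $\beta\cdot\A$ as soon as some $\hat\alpha(z,u)$, $u\geq t_0$, is not a monomial.

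\textbf{How to repair it.} For this statement the germ convention of \S\ref{sec:auto} cannot be used. Define $\Sol_c$ as the sequences $(s(t))_{t\geq t_0}$ of configurations in $C_c(\Z,\C)$ satisfying \eqref{eq:auto} for \emph{every} $t\geq t_0$; the choice \eqref{eq:t0} makes this meaningful. Then $s\mapsto s(t_0)$ is a bijection onto $C_c(\Z,\C)$ by induction on $t$, and your argument goes through verbatim. The dependence of $\beta\cdot\A$ on $t_0$, noted after \eqref{eq:t0}, is then precisely the dependence of this solution space on the base time.
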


\section{$\sigma$-Galois theory}\label{sec:sigma}

\subsection{Picard--Vessiot--Franke theory}

We refer to \cite{SvdP} and \cite{HSS} for the general Picard--Vessiot-Franke theory, also known as linear difference Galois theory. Here, we only recall some relevant facts. 
Let $(F,\sigma)$ be a $\sigma$-field of characteristic zero with algebraically closed field of $\sigma$-constants $C=F^\sigma$. A $\sigma$-ring extension $F\subseteq R$ is a \emph{Picard--Vessiot
ring} for $\sigma(X)=AX$, $A\in\GL_d(F)$, if $R$ is $\sigma$-simple, generated by the
entries of a fundamental matrix and the inverse of its determinant. It exists and is unique
up to isomorphism. A constructive way of looking at the Picard--Vessiot ring is to consider the $\sigma$-ring $F[\GL_d] = F[U,1/\det U]$, where $U$ is a matrix of variables and the operator $\sigma$ is defined by $\sigma(U) = AU$. Let $\mathcal I$ be a maximal $\sigma$-ideal of $F[\GL_d]$ and $X$ the class of $U$ modulo $\mathcal I$, so that $F[\GL_d]/\mathcal I =  F[X,1/\det X]$ is a Picard--Vessiot ring. Note that $R = F[\V(\mathcal I)]$ so that the Picard-Vessiot ring is the ring of regular functions on the $F$-variety $\V(\mathcal I)$ which is called a \emph{torsor} for the difference equation. The group,
$\Gal(\V(\mathcal I)) =\{B \in \GL_d(C) \colon \V(\mathcal I) B = \V(\mathcal I)\}$
turns out to be an algebraic group defined over $C$ such that, 
\begin{enumerate}
\item $\Gal(\V(\mathcal I)) \xrightarrow{\sim} \Aut_\sigma(R/F)$ by means of $B\mapsto \phi_B$ where $\phi_B(X) = XB$.
\item The map of $F$-varieties,
$$\V(\mathcal I) \times \Gal(\V(\mathcal I))_F \to \V(\mathcal I), \quad (X , B) \mapsto XB$$
gives to $\V(\mathcal I)$ an structure of principal homogeneous space modeled over $\Gal(\V(\mathcal I))_F$.
\end{enumerate}
There is a Galois correspondence with the intermediate $\sigma$-rings of the total ring of
fractions and algebraic subgroups of $\Aut_\sigma(R/F)$ \cite[Thm.~1.29]{SvdP}.

We will need the explicit computation of the Galois group of order one and diagonal systems of difference equations. 

\begin{definition}
The multiplicative group of $\sigma$-coboundaries of $F$ is
$$\B_\sigma(F)=\{\sigma(g)/g:g\in F^\times\}.$$    
\end{definition}

\begin{proposition}[rank one, \protect{\cite[\S2.2]{SvdP}}]\label{prop:rank1}
Let $\lambda\in F^\times$. The Galois
group of the difference equation 
$\sigma(x)=\lambda x$
over $F$ is:
\begin{enumerate}
  \item The cyclic group of $m$-th roots of $1$, $\mu_m\subset\Gm(C)$, if the class $[\lambda]$ in $F^\times/\B_\sigma(F)$ is an element of finite order $m$, that is, $m$ is the smallest natural number such that $\lambda^m$ is a $\sigma$-coboundary.
  \item The multiplicative group $\Gm(C)$ if $[\lambda]\in F^\times/\B_\sigma(F)$ has no finite order, that is, no power of $\lambda$ is a $\sigma$-coboundary.
\end{enumerate}
\end{proposition}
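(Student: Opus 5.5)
The plan is to work directly with the Picard--Vessiot torsor for $\sigma(x)=\lambda x$ and use that the Galois group is an algebraic subgroup of $\Gm(C)$. The only algebraic subgroups of $\Gm(C)$ are the finite cyclic groups $\mu_m$ and $\Gm(C)$ itself. So it suffices to characterize when the group is contained in $\mu_m$ for a given $m$.

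\textbf{Setup.} Take $F[\GL_1]=F[U,1/U]$ with $\sigma(U)=\lambda U$, a maximal $\sigma$-ideal $\mathcal I$, the solution $X$, and $R=F[X,1/X]$.

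\textbf{Step 1: reduction to $\mathcal I$.} By the Galois correspondence and the torsor description, $\Gal\subseteq\mu_m$ exactly when the subring $F[X^m]$ is fixed by the group. It is therefore the image of $F$, which means $X^m\in F$ up to the identification of $R^{\Gal}=F$. Equivalently, the torsor $\V(\mathcal I)$ is contained in $\{U^m=g\}$ for some $g\in F^\times$, that is, $U^m-g\in\mathcal I$.

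\textbf{Step 2: ``if'' direction.} Suppose $\lambda^m=\sigma(g)/g$ with $g\in F^\times$. The ideal $(U^m-g)$ is a $\sigma$-ideal, because
\[
\sigma(U^m-g)=\lambda^m U^m-\sigma(g)=\lambda^m(U^m-g).
\]
It is proper since $U^m-g$ is not a unit. A maximal $\sigma$-ideal containing it gives a torsor inside $\{U^m=g\}$. By uniqueness of the Picard--Vessiot ring the group is independent of this choice, and it stabilizes $\{U^m=g\}$, so it lies in $\mu_m$.

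\textbf{Step 3: ``only if'' direction.} Suppose $\Gal\subseteq\mu_m$. Then $X^m$ is invariant under every $\phi_B$, so $X^m\in R^{\Gal}$. The fixed ring of the Picard--Vessiot ring is $F$; this is the standard fact, since $R$ is $\sigma$-simple and the Galois correspondence holds in the total ring of fractions. Hence $X^m=g\in F^\times$, and applying $\sigma$ gives $\lambda^m g=\sigma(g)$. So $\lambda^m\in\B_\sigma(F)$.

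\textbf{Conclusion.} Combining the steps, $\Gal\subseteq\mu_m$ if and only if $[\lambda]^m=1$ in $F^\times/\B_\sigma(F)$.
\begin{enumerate}
\item If $[\lambda]$ has finite order $m$, the group is a subgroup $\mu_{m'}$ of $\mu_m$. Applying the equivalence to $m'$ gives $[\lambda]^{m'}=1$, so $m\mid m'$ and hence $m'=m$.
\item If $[\lambda]$ has infinite order, the group is contained in no $\mu_m$. Being an algebraic subgroup of $\Gm(C)$, it must be all of $\Gm(C)$.
\end{enumerate}

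\textbf{Main obstacle.} The delicate point is Step 3, the identity $R^{\Gal}=F$ in the difference setting, where $R$ need not be a domain. I would quote it from \cite[Thm.~1.29]{SvdP} rather than reprove it.

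If an elementary route is preferred, write $R=\bigoplus_{k\in\Z}F X^k$ modulo relations. Maximality of $\mathcal I$ forces $\mathcal I$ to be generated by a single binomial $U^m-g$. Indeed, $\mathcal I$ is stable under the grading induced by $C^\times$-scaling $U\mapsto cU$ up to conjugation, and a minimal-degree element must be a binomial. The group is then visibly $\mu_m$, which gives the statement directly.
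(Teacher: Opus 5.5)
The paper gives no proof of this proposition; it is quoted from \cite[\S2.2]{SvdP}. Your argument is correct and is the standard one. The group is an algebraic subgroup of $\Gm(C)$, hence either some $\mu_{m'}$ or all of $\Gm(C)$. Moreover $\Gal\subseteq\mu_m$ holds iff $X^m$ is $\Gal$-invariant, iff $X^m\in R^{\Gal}=F$, iff $\lambda^m\in\B_\sigma(F)$. Your divisibility argument $m\mid m'\mid m$ then settles the finite case. In Step 2 you need not choose a special maximal ideal: if $\lambda^m=\sigma(g)/g$, then $\sigma(X^m/g)=X^m/g$, and since $R^\sigma=C$ this gives $X^m\in C^\times g$ directly.

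The one part to discard is the optional ``elementary route'' in your last paragraph, which is wrong as stated.
\begin{enumerate}
\item The scaling $U\mapsto cU$ maps $\mathcal I$ to itself only when $c\in\Gal$. So it does not make $\mathcal I$ a $\Z$-graded ideal; in the finite case $\mathcal I=(U^m-cg)$ is not homogeneous.
\item In the infinite-order case $\mathcal I=0$, which is not generated by a binomial.
\end{enumerate}
The elementary argument that does work goes as follows. Take $0\neq f=\sum_j f_jU^j\in\mathcal I$ of minimal support length, normalized so that its top coefficient is $f_n=1$. Then $\lambda^{-n}\sigma(f)-f\in\mathcal I$ has strictly shorter support, so it vanishes. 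Hence $\sigma(f_j)=\lambda^{n-j}f_j$, and so $\lambda^{n-j}\in\B_\sigma(F)$, for every $j$ in the support. If no power of $\lambda$ is a coboundary, $f$ would be a monomial, hence a unit, which is impossible; so $\mathcal I=0$. If $[\lambda]$ has order $m$, one gets $f\in U^aF[U^m]$.
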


\begin{proposition}[\protect{\cite[\S2.2]{SvdP}}]\label{prop:diag}
Let $c_i\in F^\times$, $i\in J$ finite, and let $R$ be the Picard--Vessiot ring of the
diagonal system $\sigma(x_i)=c_ix_i$. Then
\[
\Aut_\sigma(R/F)=\Big\{\lambda\in\Gm(C)^J:\ \prod_i\lambda_i^{m_i}=1
\ \ \forall m\in\mathfrak m_J\Big\},\qquad
\mathfrak m_J=\Big\{m\in\Z^J:\ \prod_i c_i^{m_i}\in\B_\sigma(F)\Big\}.
\]
\end{proposition}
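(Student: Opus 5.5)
The plan is to compute the Galois group through the torsor description recalled above, and then identify it with a group of characters. Concretely, I take the Picard--Vessiot ring $R=F[\V(\mathcal I)]$ attached to the diagonal matrix $A=\operatorname{diag}(c_i)$. Let $x_i\in R$ be the images of the diagonal variables; the off-diagonal entries vanish on the torsor, or at any rate the diagonal entries already generate a $\sigma$-simple ring, so by uniqueness we may take $R=F[x_i,x_i^{-1}]/\mathcal I$ with $\sigma(x_i)=c_ix_i$. The automorphisms are then $\phi_\lambda(x_i)=\lambda_ix_i$ for $\lambda$ in a closed subgroup $H\subseteq\Gm(C)^J$. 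The statement to prove is $H=\mathfrak m_J^\perp$, where the orthogonal is taken in the character pairing $\Z^J\times\Gm^J\to\Gm$.

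\textbf{Inclusion $H\subseteq\mathfrak m_J^\perp$.} Take $m\in\mathfrak m_J$, so that $\prod_i c_i^{m_i}=\sigma(g)/g$ with $g\in F^\times$. Put $y=g^{-1}\prod_i x_i^{m_i}\in R^\times$. Then $\sigma(y)=y$, so $y\in R^\sigma$. For a Picard--Vessiot ring one has $R^\sigma=C$, which follows from $\sigma$-simplicity together with $C$ being algebraically closed \cite{SvdP}. Hence $y=c\in C^\times$. Applying $\phi_\lambda$ gives $\prod_i\lambda_i^{m_i}\,y=y$, and therefore $\prod_i\lambda_i^{m_i}=1$.

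\textbf{Inclusion $\mathfrak m_J^\perp\subseteq H$.} This is the main obstacle. Since $H$ is a diagonalizable algebraic group, it equals $X^\perp$ for its character lattice annihilator $X=\{m:\chi^m|_H=1\}$, by the standard duality between closed subgroups of tori and subgroups of $\Z^J$. So it suffices to show $X\subseteq\mathfrak m_J$. Take $m\in X$ and set $u=\prod_i x_i^{m_i}\in R^\times$. Then $u$ is fixed by every $\phi_\lambda$ with $\lambda\in H$. By the Galois correspondence \cite[Thm.~1.29]{SvdP}, or directly from the torsor isomorphism $R\otimes_F R\cong R\otimes_C C[H]$, the $H$-invariants of $R$ (and of its total ring of fractions) are exactly $F$. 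Hence $u=g'\in F$ and $g'\neq0$, because $u$ is a unit. Then $\prod_i c_i^{m_i}=\sigma(u)/u=\sigma(g')/g'\in\B_\sigma(F)$, so $m\in\mathfrak m_J$.

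The delicate point is the claim $R^H=F$ for a ring that need not be a domain. I would justify it from the torsor property stated in item (2) above. The coaction makes $R$ an $H_F$-torsor, so $R\otimes_F R\cong R\otimes_C C[H]$. Invariance of $u$ under this coaction then means $u\otimes1=1\otimes u$ in $R\otimes_F R$, and faithfully flat descent gives $u\in F$. Combining both inclusions yields $H=\mathfrak m_J^\perp$, as stated, and Proposition~\ref{prop:rank1} is recovered as the case $|J|=1$.
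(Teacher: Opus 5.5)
Your proof is correct. The paper gives no argument of its own here, since the proposition is quoted from van der Put--Singer, and what you wrote is the standard proof. For $\Aut_\sigma(R/F)\subseteq\mathfrak m_J^\perp$: the element $g^{-1}\prod_i x_i^{m_i}$ is a $\sigma$-constant, hence lies in $R^\sigma=C$. For the reverse inclusion: the duality between closed subgroups of $\Gm(C)^J$ and subgroups of $\Z^J$, combined with $R^{\Aut_\sigma(R/F)}=F$. Your descent argument for the latter is sound even when $R$ is not a domain. The step from invariance under the points of $H$ to $u\otimes1=1\otimes u$ uses that $H(C)$ is dense in $H$, which holds because $C$ is algebraically closed of characteristic zero.

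One sentence of your reduction should be corrected. The off-diagonal entries need not vanish on an arbitrary torsor. If $c_1=c_2=c$ and $x$ is an invertible solution of $\sigma(x)=cx$ in a Picard--Vessiot ring, then the kernel of $U\mapsto xB$, with $B\in\GL_2(C)$ non-diagonal, is a maximal $\sigma$-ideal not containing $U_{12}$. Your fallback is the right fix, but say why it works. The ideal generated by the $U_{ij}$, $i\neq j$, is a proper $\sigma$-ideal since $\sigma(U_{ij})=c_iU_{ij}$, so $\mathcal I$ may be chosen to contain it. Alternatively, in any Picard--Vessiot ring the nonzero entries of the $i$-th row of a fundamental matrix are units whose ratios lie in $R^\sigma=C$. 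So a diagonal fundamental matrix generating $R$ always exists.
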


\subsection{Results of \cite{BM}}

Three results of \cite{BM} are used below. For finite $G$ the Fourier transform is a gauge
transformation and reduces the structure group \cite[Thms.~11 and 12]{BM}, so that the
Galois groups of the automaton and of its transform coincide and
$\dim_\C\Aut_\sigma(L/\kappa)\leq|G|$. For discrete abelian $G$,
\begin{equation}\label{eq:BM9}
L_{per}=\kappa\Big[\beta_\zeta:\ \zeta\in\Tor(\widehat G)\Big]
\end{equation}
\cite[Thm.~9]{BM}, where $\beta_\zeta(t)=\beta(\zeta,t)$ denotes the value of the
transformed fundamental solution at the torsion character $\zeta$. Finally, $\kappa\subseteq L_{per}$ is a Hopf--Galois extension whose Galois
group
\[
\Gal(L_{per}/\kappa):=\varprojlim_{H}\Aut_\sigma(L_{H\text{-}per}/\kappa)
\]
is pro-algebraic and coincides with $\Aut_\sigma(L_{per}/\kappa)$
\cite[Lem.~13, Prop.~14, Thm.~15]{BM}. In the abelian case, \cite[Ex.~9]{BM} describes it
inside the infinite torus $\prod_{\zeta}\C^\times$ by the equations $\lambda_\zeta=1$ at
the zeros of $\hat\alpha$ together with the multiplicative resonances. We record that
description in closed form.

\subsection{The periodic group is a character group}

\begin{definition}\label{def:Lambda}
Let $\hat\alpha\in \kappa[z^{\pm1}]$ and set
\[
\Lambda\ :=\ \big\langle\,[\hat\alpha(\zeta)]\ :\ \zeta\in\mu_\infty,\
\hat\alpha(\zeta)\neq0\,\big\rangle\ \leq\ \kappa^\times/\B_\sigma(\kappa),
\]
the \emph{resonance group} of the automaton: the subgroup generated by the classes of the
values of the symbol at the torsion points of $\widehat\Z$.
\end{definition}

\begin{theorem}\label{thm:dual}
There is a canonical isomorphism of pro-algebraic groups
\[
\Gal\big(L_{per}/\kappa\big)\ \cong\ \Hom\big(\Lambda,\C^\times\big).
\]
Consequently the identity component is a pro-torus of rank $\operatorname{rk}\Lambda$, the
component group is the character group of $\Tor(\Lambda)$, and $\Gal(L_{per}/\kappa)$ is finite
if and only if $\Lambda$ is finite.
\end{theorem}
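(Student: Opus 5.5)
The plan is to compute each finite level $\Aut_\sigma(L_{T\text{-}per}/\kappa)$ with Proposition~\ref{prop:diag}, identify it with the character group of a finitely generated subgroup $\Lambda_T\le\Lambda$, and then pass to the limit using the duality between direct limits of abelian groups and projective limits of diagonalizable groups.

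\textbf{Step 1: the finite levels.} By \eqref{eq:BM9} restricted to $T$-periodic solutions (equivalently, the gauge transformation of \cite[Thms.~11, 12]{BM} for $\Z/T\Z$), $L_{T\text{-}per}=\kappa[\beta_\zeta:\zeta\in\mu_T]$. Here $\sigma(\beta_\zeta)=\hat\alpha(\zeta)\beta_\zeta$. Moreover $\beta_\zeta=0$ whenever $\hat\alpha(\zeta)=0$ as a germ, which gives the equations $\lambda_\zeta=1$ at the zeros of the symbol.

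Let $J_T=\{\zeta\in\mu_T:\hat\alpha(\zeta)\neq0\}$. Proposition~\ref{prop:diag} then gives
\[
\Aut_\sigma(L_{T\text{-}per}/\kappa)=\{\lambda\in(\C^\times)^{J_T}:\textstyle\prod\lambda_\zeta^{m_\zeta}=1\ \forall m\in\mathfrak m_{J_T}\}.
\]
The lattice $\mathfrak m_{J_T}$ is exactly the kernel of the surjection $\Z^{J_T}\to\Lambda_T:=\langle[\hat\alpha(\zeta)]:\zeta\in J_T\rangle$, $e_\zeta\mapsto[\hat\alpha(\zeta)]$. Hence the right-hand side is the set of homomorphisms $\Z^{J_T}\to\C^\times$ vanishing on that kernel, that is, $\Hom(\Lambda_T,\C^\times)$.

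This identification is canonical: an automorphism $\phi$ determines the character $[\hat\alpha(\zeta)]\mapsto\phi(\beta_\zeta)/\beta_\zeta$. One must check that this character is well defined on $\Lambda_T$. If $\prod\hat\alpha(\zeta)^{m_\zeta}=\sigma(g)/g$, then $\prod\beta_\zeta^{m_\zeta}/g$ is $\sigma$-invariant in the total ring of fractions. Since the constants are $\C$, it is fixed by $\phi$, and so $\prod\lambda_\zeta^{m_\zeta}=1$.

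\textbf{Step 2: compatibility and the limit.} For $T\mid T'$ we have $\mu_T\subset\mu_{T'}$, so $\Lambda_T\le\Lambda_{T'}$. The restriction map $\Aut_\sigma(L_{T'\text{-}per})\to\Aut_\sigma(L_{T\text{-}per})$ is then the restriction of characters, and the identification of Step~1 commutes with it. Since $\Lambda=\varinjlim_T\Lambda_T=\bigcup_T\Lambda_T$ and $\Hom(-,\C^\times)$ turns direct limits into inverse limits, we get
\[
\Gal(L_{per}/\kappa)=\varprojlim\Hom(\Lambda_T,\C^\times)=\Hom(\Lambda,\C^\times).
\]
This is an isomorphism of pro-algebraic groups, because each finite level is an isomorphism of diagonalizable algebraic groups, with $\Hom(\Lambda_T,\C^\times)=\operatorname{Spec}\C[\Lambda_T]$.

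\textbf{Step 3: the consequences.} These follow from the structure of $\Lambda$ and from exactness of $\Hom(-,\C^\times)$, which holds because $\C^\times$ is divisible, hence injective. The sequence $0\to\Tor\Lambda\to\Lambda\to\Lambda/\Tor\to0$ dualizes to
\[
1\to\Hom(\Lambda/\Tor,\C^\times)\to\Gal\to\Hom(\Tor\Lambda,\C^\times)\to1.
\]
The first term is the character group of a torsion-free group. It is connected: each finitely generated piece is a torus, and an inverse limit of connected groups is connected. It is a pro-torus of rank $\operatorname{rk}\Lambda$.

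The last term is profinite, since $\Tor\Lambda$ is a union of finite groups. This identifies both the identity component and the component group. Finiteness holds if and only if $\Lambda$ is finite, by Pontryagin-type duality for the finite pieces together with the observation that $\Hom(\Lambda,\C^\times)$ is infinite as soon as $\Lambda$ is infinite.

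\textbf{Main obstacle.} I expect the delicate point to be the canonicity and compatibility in Step~1. One has to make sure the abstract group $\Aut_\sigma(L_{T\text{-}per}/\kappa)$ of \cite{BM} really is the group of Proposition~\ref{prop:diag} applied to the $\beta_\zeta$ with $\hat\alpha(\zeta)\neq0$. In particular, $L_{T\text{-}per}$ must be the Picard--Vessiot ring of that diagonal system; this is what \cite[Thms.~11, 12]{BM} provide. One also has to check that the restriction maps match the inclusions $\Lambda_T\subseteq\Lambda_{T'}$, which are independent of any choices of representatives.
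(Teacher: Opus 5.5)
Your proposal is essentially the paper's proof: compute each finite level with Proposition~\ref{prop:diag}, recognize the lattice $\mathfrak m$ as the kernel of $\Z^J\to\kappa^\times/\B_\sigma(\kappa)$ so that the level is $\Hom(\Lambda_J,\C^\times)$, and pass to the limit using that $\Hom(-,\C^\times)$ turns direct limits into inverse limits. The differences are minor. The paper indexes by finite sets $J$ of torsion characters with $\hat\alpha(\zeta)\neq0$ and works with $L_J=\kappa[\beta_\zeta^{\pm1}:\zeta\in J]$, adjoining the inverses explicitly for $\sigma$-simplicity; this is how it handles the point you flag as the main obstacle. You instead index by $T$, and you also spell out the consequences via the dual exact sequence, which the paper leaves implicit.
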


\begin{proof}
Let $I=\{\zeta\in\mu_\infty:\hat\alpha(\zeta)\neq0\}$. By \eqref{eq:BM9}, $L_{per}$ is
generated over $\kappa$ by the solutions of the diagonal family
$\sigma(\hat x_\zeta)=\hat\alpha(\zeta)\hat x_\zeta$, $\zeta\in I$. For $J\subseteq I$
finite let $L_J$ be the corresponding Picard--Vessiot ring, that is
$\kappa\big[\beta_\zeta^{\pm1}:\zeta\in J\big]$; the inverses have to be adjoined for
$\sigma$-simplicity, as in \cite[Ex.~9]{BM}, and accordingly the Galois group of the
periodic solutions is computed through these rings,
$\Gal(L_{per}/\kappa)=\varprojlim_J\Aut_\sigma(L_J/\kappa)$, which is the projective limit of
\cite[Prop.~14]{BM} indexed by the finite sets of torsion characters. By Proposition \ref{prop:diag},
$\Aut_\sigma(L_J/\kappa)$ is cut out in $(\C^\times)^J$ by the module $\mathfrak m_J$, which is
precisely the kernel of the morphism $\Z^J\to \kappa^\times/\B_\sigma(\kappa)$ sending $e_\zeta$ to
$[\hat\alpha(\zeta)]$. Denoting by $\Lambda_J$ its image,
\[
\Aut_\sigma(L_J/\kappa)=\Hom(\Z^J/\mathfrak m_J,\C^\times)=\Hom(\Lambda_J,\C^\times).
\]
Now $\Lambda=\varinjlim_J\Lambda_J$, and $\Hom(-,\C^\times)$ takes direct limits to
inverse limits, which gives the isomorphism. Each
$\Hom(\Lambda_J,\C^\times)$ is algebraic since $\Lambda_J$ is finitely generated, so that we obtain the
pro-algebraic structure.
\end{proof}

\begin{remark}
Theorem \ref{thm:dual} is \cite[Ex.~9]{BM} in closed form. Its usefulness is that it
identifies the periodic Galois group with a concrete arithmetic object: the multiplicative
group generated by the values of the symbol at roots of unity, modulo coboundaries.
\end{remark}

\section{$\sigma\delta$-Galois theory}\label{sec:sd}

\subsection{Differential algebraic groups}

Let $(k,\delta)$ be a differentially closed field of characteristic zero with field of
constants $C=k^\delta$, which is then algebraically closed. A $k$-$\delta$-group is a group
defined by differential polynomial equations over $k$; we use the invariants
$\ddim_k(G)$ (the differential dimension), $\ord(G)=\trdeg_kk\langle G\rangle$ and
$\dim\overline G^{\,\mathrm{Zar}}$, which in general differ. We write $k[\delta]$ for the
Ore ring of linear differential operators $\sum_{j\leq r}c_j\delta^j$, $c_j\in k$; it is a
principal left ideal domain.

The following theorem from P. Cassidy \cite{Cassidy} (see also
\cite[Thm.~3.68]{HSS}) gives us an important tool. Any Zariski dense
subgroup of the multiplicative group is given by a linear differential equation in the logarithmic derivative. 

\begin{proposition}[Cassidy]\label{prop:cassidy}
The $k$-$\delta$-subgroups of $\Ga$ are exactly the $\V(L)=\{y:L(y)=0\}$ with
$L\in k[\delta]$, the case $L=0$ giving $\Ga$ itself. Via the logarithmic derivative
$\ell:\Gm\to\Ga$, defined $\ell(y) = \delta(y)/y$, whose kernel is $\Gm(C)=C^\times$, the $k$-$\delta$-subgroups of $\Gm$
that are Zariski dense are of the form $\V(L(\ell y)) = \{y:L(\ell(y))=0\} $.
\end{proposition}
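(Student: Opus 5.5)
The proof splits into the additive statement and its multiplicative transfer through $\ell$.

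\emph{Additive part.} Let $H\subseteq\Ga$ be a $k$-$\delta$-subgroup. It is cut out by differential polynomials, so $H=\V(P_1,\dots,P_s)$. The aim is to replace these by linear ones. Each $\V(L)$, $L\in k[\delta]$, is clearly a subgroup. For the converse, use that $H$ is a subgroup defined over $k$ and that $k$ has characteristic zero; then its defining ideal in $k\{y\}$ is generated by additive differential polynomials. Concretely, take $P$ in the ideal $I(H)$ of minimal order and degree. Consider $P(y+h)-P(y)$ for a generic point $h$ of $H$; it also lies in $I(H)$ and has smaller degree in the leading derivative. This forces $P$ to be additive, $P(y_1+y_2)=P(y_1)+P(y_2)$. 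In characteristic zero an additive differential polynomial is linear, $P=L(y)$ with $L\in k[\delta]$. Now $\V(L)$ is a finite-dimensional $C$-vector space of dimension $\ord L$, since $k$ is differentially closed. The subgroup $H\cap\V(L)$ has the same order. A $\delta$-closed subgroup of $\V(L)$ is a $C$-subspace, because it is stable under $C$-scalars by Zariski-density arguments on the $C$-points. The $C$-subspaces of $\V(L)$ are exactly the $\V(L')$ with $L'$ a right divisor of $L$. Combining these with the principal-ideal property of $k[\delta]$ (a left gcd/lcm argument) gives $H=\V(L')$. If $I(H)=0$, then $H=\Ga$, which is the case $L=0$.

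\emph{Multiplicative part.} Let $G\subseteq\Gm$ be a Zariski dense $k$-$\delta$-subgroup. The logarithmic derivative $\ell$ is a surjective homomorphism of $\delta$-groups $\Gm\to\Ga$, surjective because $k$ is differentially closed, with kernel $C^\times$. The key step is to show $C^\times\subseteq G$. Then $G=\ell^{-1}(\ell(G))$, and $\ell(G)$ is a $\delta$-subgroup of $\Ga$, since images of $\delta$-groups under $\delta$-homomorphisms are $\delta$-closed. By the additive part, $\ell(G)=\V(L)$, so $G=\{y:L(\ell y)=0\}$.

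\emph{Main obstacle.} The step I expect to be hardest is showing that $G\supseteq C^\times$. I would argue as follows. $G\cap C^\times$ is a $\delta$-closed subgroup of the algebraic group $\Gm(C)$, hence Zariski closed in it over $C$: it is either finite $\mu_m$ or all of $C^\times$. If it were $\mu_m$, then $\ell|_G$ would have finite kernel. I would then compare with $y\mapsto y^m$. The group $G^m$ is still Zariski dense, and $\ell(G^m)=m\,\ell(G)=\ell(G)$ is a linear space $\V(L)$. So $G^m$ would be a section of $\ell$ over $\V(L)$ meeting $C^\times$ trivially. Such a section would give a $k$-definable splitting $C^\times\times\V(L)\cong\ell^{-1}(\V(L))$, and I would contradict it by an order/Kolchin-polynomial count or by Zariski density: a definable complement to $C^\times$ in $\Gm$ cannot be Zariski dense unless the extension splits, which fails because $\exp$ is not differential-algebraic. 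This is essentially Cassidy's argument, and I would follow \cite{Cassidy}, \cite[Thm.~3.68]{HSS} for this point.
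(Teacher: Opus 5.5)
The paper gives no proof of this proposition; it quotes it from \cite{Cassidy} and \cite[Thm.~3.68]{HSS}. Read as a self-contained proof, your proposal has two genuine gaps.

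\textbf{Additive part.} Your claim that an element $P\in I(H)$ of minimal order and minimal degree in the leader must be additive is false. Take $H=\V(\delta)=C$ and $P=y\,\delta y$. It has minimal order and degree, since no nonzero polynomial in $y$ alone vanishes on the infinite set $C$. Yet for $h\in C$ one gets $P(y+h)-P(y)=h\,\delta y$, which has the same degree in $\delta y$: the initial $y$ is not in $k$, so nothing cancels. And $P$ is not additive. If you take $P$ of minimal total degree instead, the translation trick only gives $P(y+h)=P(y)$ for $h\in H$, which is invariance, not additivity.

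What actually produces a nonzero $L$ with $H\subseteq\V(L)$ is linearity of the Zariski closure of the prolongation. Fix $m$ with $I(H)\cap k[y,\delta y,\dots,\delta^my]\neq0$. The Zariski closure $A$ of $\{(h,\delta h,\dots,\delta^mh):h\in H\}$ in $k^{m+1}$ is a proper algebraic subgroup of $\Ga^{m+1}$. In characteristic zero it is therefore a $k$-linear subspace: for $a\in A$, the set $\{\lambda\in k:\lambda a\in A\}$ is Zariski closed and contains $\Z$. A nonzero linear form $\sum_jc_jy_j$ vanishing on $A$ then gives $L=\sum_jc_j\delta^j$. From there your reduction to $C$-subspaces of $\V(L)\cong C^{\ord L}$ is fine.

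\textbf{Multiplicative part.} You correctly isolate $C^\times\subseteq G$ as the crux, and your reduction is right: if $G\cap C^\times=\mu_m$, then $\ell$ maps $G^m$ bijectively onto $\ell(G)=\V(L)$. But you never derive a contradiction, and the reasons you offer do not give one.
\begin{itemize}
\item A bare count of orders is consistent: with $r=\ord L$, both sides of $C^\times\times G^m\cong\ell^{-1}(\V(L))$ have order $1+r$.
\item Zariski density of $G^m$ is harmless.
\item Exponentials of integrals \emph{are} differential-algebraic, since they satisfy $\delta y=uy$.
\end{itemize}
The case $L=0$ is immediate: $\ddim G=1$ forces $G=\Gm$. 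So is $r=0$: then $G\subseteq C^\times$ is infinite, so $G=C^\times$.

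For $r\geq1$ the missing idea is a transcendence lemma. Let $u_1,\dots,u_r$ be a $C$-basis of $\V(L)$ and $c_1,\dots,c_r$ constants algebraically independent over $k$. Then $u=\sum_ic_iu_i$ is a generic point of $\V(L)$, and $k\langle u\rangle=k(c_1,\dots,c_r)$. The lemma says no $y$ with $\ell(y)=u$ is algebraic over $k(c)$. To prove it:
\begin{enumerate}
\item Differentiating the minimal polynomial $Y^n+\dots+a_0$ of $y$ gives $a_0\in k(c)^\times$ with $\ell(a_0)=nu$.
\item Write $a_0=\lambda\prod_jp_j^{m_j}$ with $p_j\in k[c]$ irreducible of leading coefficient $1$ for a fixed monomial order. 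Then $\ell(a_0)=\ell(\lambda)+\sum_jm_j\,\delta(p_j)/p_j$, where $\delta$ acts on coefficients and so strictly lowers the leading monomial.
\item Since $\ell(a_0)\in k[c]$, each $p_j$ divides $\delta(p_j)$. Hence $\delta(p_j)=0$ and $\ell(a_0)=\ell(\lambda)\in k$, contradicting $\deg_c(nu)=1$.
\end{enumerate}
Now apply this to $G^m$. Since $\ell|_{G^m}$ is injective onto the connected group $\V(L)$, a generic point $g$ of $G^m$ lies over the generic point $\ell(g)$ of $\V(L)$. It is also algebraic over (indeed in) $k\langle\ell(g)\rangle$, which is exactly what the lemma forbids. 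Without this step, your key claim rests on the same citation the paper makes.
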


\subsection{Parameterized Picard--Vessiot theory}\label{ss:parameterized}

Let $(K,\sigma,\delta)$ be a $\sigma\delta$-field of characteristic zero with $\sigma\delta = \delta\sigma$, such that
$k=K^\sigma$ is a $\delta$-closed field, and let $C=k^\delta$. Given a system of linear difference equations $\sigma(X)=AX$ with $A\in\GL_d(K)$, a parameterized Picard--Vessiot ring is a
$\sigma\delta$-simple $\sigma\delta$-ring $R$ generated over $K$ by a fundamental matrix and
the inverse of its determinant. The existence and uniqueness of the parameterized Picard--Vessiot ring under the above hypotheses is established in \cite{HS,Wibmer}. Its group of $\sigma\delta$-automorphisms $\dGal(R/K)$ is a
$k$-$\delta$-subgroup of $\GL_d(k)$ and there is a Galois correspondence
\cite[\S4]{HSS}. 

As above, a constructive way of dealing with parameterized Picard--Vessiot rings is to consider the ring $K\{U,1/\det U\} = K[U, 1/\det U, \delta U, \delta^2 U, \ldots]$ of regular $\delta$-functions on $\GL_d(K)$, where $\sigma$ is defined by $\sigma U = AU$, and a maximal $\sigma\delta$-ideal $\mathcal I\subset K\{U,1/\det U\}$, so that $R = K\{U,1/\det U\}/\mathcal I$. Now, we have a $\delta$-torsor $\V(\mathcal I)$ which is a Kolchin closed $\delta$-variety and a principal homogeneous space over $\dGal(R/K)$. The relation between the parameterized and classical Picard--Vessiot theory is clear:
\begin{enumerate}
    \item The Zariski closure of the $\delta$-torsor $\V(\mathcal I)$ is a torsor for the same equation in the classical Picard--Vessiot theory.
    \item The Zariski closure of $\dGal(R/K)$ is the Galois group for the same equation in the classical Picard--Vessiot theory \cite[Prop.~4.38]{HSS}. 
\end{enumerate}

The invariant we use is
\begin{equation}\label{eq:dimtrdeg}
\dtrdeg\big(Q(R)\mid K\big)=\ddim_k\big(\dGal(R/K)\big),
\end{equation}
where $Q(R)$ denotes the total ring of fractions of $R$; see \cite[Prop.~4.36]{HSS}. 



\subsection{$\delta$-Galois group of rank one equations.} Here we give the computation of rank one equations under the general hypothesis of \S\ref{ss:parameterized}. 
\begin{equation}\label{eq:rank1_general}
\sigma(x)=\alpha\,x,\qquad \alpha\in K^\times, \qquad \alpha^{m}\notin\B_\sigma(K)\quad\text{for all }m\geq1 .
\end{equation}
We do not allow any power of $\alpha$ to be a $\sigma$-coboundary in order to avoid the case of finite Galois group. This is equivalent to say that the $\delta$-Galois group is Zariski dense in the multiplicative $\Gm(k)$. In particular the solution $x$ in the Picard-Vessiot extension is $\delta$-transcendental over $K$ if and only if the group is all of $\Gm(k)$.

\begin{definition}
An element $f\in K$ is said to be \emph{$\sigma$-summable}\footnote{This terminology comes from the context of sequences, in which $\sigma$ is the shift operator: $f=\sigma(g)-g$ means $g(t) = g(t_0) + \sum_{s=t_0}^{t-1} f(s)$, so that $g$ is an indefinite sum of $f$.} if it is of the form $f = \sigma(g) - g$ for some $g$ in $K$. 
\end{definition}

The space of $\sigma$-summable elements $(\sigma-1)K$, as $\sigma$ commutes with $\delta$, is a $k[\delta]$-submodule of $K$. For $\alpha\in K$ let us consider the set,
\[
\Lc(\alpha)=\big\{L\in k[\delta]:\ L(\ell(\alpha))\in(\sigma-1)K\big\}
\]
consisting of all differential operators $L$ such that $L(\ell(\alpha))$ is $\sigma$-summable.

\begin{lemma}\label{lem:module}
$\Lc(\alpha)$ is a left ideal of $k[\delta]$. Thus, $\Lc(\alpha)=k[\delta]L_0$ for a unique monic $L_0$, or
$\Lc(\alpha)=0$. Moreover $\Lc(c\alpha)=\Lc(\alpha)$ for $\delta$-constant
$c\in K^\times$, and $\Lc\big(\alpha\,\sigma(h)/h\big)=\Lc(\alpha)$ for all
$h\in K^\times$.
\end{lemma}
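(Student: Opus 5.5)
The plan is to check the three assertions directly from the definitions, using only two structural facts. First, $(\sigma-1)K$ is a $k[\delta]$-submodule of $K$. Second, $\ell$ is a homomorphism from the multiplicative group $K^\times$ to the additive group $K$ that commutes with $\sigma$.

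\emph{Left ideal.} If $L_1,L_2\in\Lc(\alpha)$, then $(L_1+L_2)(\ell(\alpha))=L_1(\ell(\alpha))+L_2(\ell(\alpha))$ is a sum of two $\sigma$-summable elements, hence $\sigma$-summable. If $P\in k[\delta]$ and $L\in\Lc(\alpha)$, write $L(\ell(\alpha))=\sigma(g)-g$. Then
\[
(PL)(\ell(\alpha))=P\big(\sigma(g)-g\big)=\sigma(P(g))-P(g).
\]
This uses $\delta\sigma=\sigma\delta$ together with $\sigma(c)=c$ for $c\in k=K^\sigma$, so that $\sigma$ commutes with every operator in $k[\delta]$. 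Hence $PL\in\Lc(\alpha)$, and $\Lc(\alpha)$ is a left ideal. Since $k[\delta]$ is a principal left ideal domain, a nonzero left ideal is generated by an element of minimal order. Normalizing its leading coefficient to $1$ gives $L_0$. It is unique: two monic generators of the same order differ by an element of the ideal of strictly smaller order, which must therefore be $0$.

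\emph{Invariance.} If $c\in K^\times$ satisfies $\delta c=0$, then $\ell(c\alpha)=\ell(c)+\ell(\alpha)=\ell(\alpha)$, so $\Lc(c\alpha)=\Lc(\alpha)$ trivially. For the coboundary twist,
\[
\ell\big(\alpha\,\sigma(h)/h\big)=\ell(\alpha)+\ell(\sigma(h))-\ell(h)=\ell(\alpha)+\sigma(\ell(h))-\ell(h).
\]
Here $\ell\circ\sigma=\sigma\circ\ell$ because $\sigma$ is a ring endomorphism commuting with $\delta$. Applying $L$ and again commuting $L$ with $\sigma$ gives
\[
L\big(\ell(\alpha\sigma(h)/h)\big)=L(\ell(\alpha))+(\sigma-1)\big(L(\ell(h))\big).
\]
So the left side is summable if and only if $L(\ell(\alpha))$ is, and the two ideals coincide.

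The only point needing care is that $\sigma$ commutes with all of $k[\delta]$. This requires the coefficients to be $\sigma$-constants, which holds by the choice $k=K^\sigma$. Everything else is formal.
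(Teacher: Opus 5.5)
Your proposal is correct and follows the paper's proof. The paper packages the left-ideal step as $\Lc(\alpha)=\Ann_{k[\delta]}([\ell(\alpha)])$ in the $k[\delta]$-module $K/(\sigma-1)K$, which is exactly your hand verification. Your two invariance identities are the same ones the paper uses.
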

\begin{proof}
Note that $K / (\sigma - 1)K$ is a
$k[\delta]$-module and $\Lc(\alpha) =\Ann_{k[\delta]}([\ell(\alpha)]) $ is the annihilator ideal of $[\ell(\alpha)]$. As
$k[\delta]$ is a principal left ideal domain the stated form follows. For the last two
assertions, $\ell(c\alpha)=\ell(c)+\ell(\alpha)=\ell(\alpha)$ when $\delta c=0$, and
$\ell(\alpha\sigma(h)/h)=\ell(\alpha)+(\sigma-1)\ell(h)$.
\end{proof}

\begin{lemma}[fundamental identity]\label{lem:fund}
The logarithmic derivative sends $\sigma$-coboundaries to $\sigma$-summable elements:
$$\ell \colon \B_\sigma(K) \to (\sigma-1)K, \quad \alpha \to \ell(\alpha).$$
Moreover, let $x$ in any $\sigma\delta$-extension of $K$ satisfy $\sigma(x)=\alpha x$. Then
\begin{equation}\label{eq:fundid}
\sigma(\ell(x))- \ell(x)= \ell(\alpha).
\end{equation}
\end{lemma}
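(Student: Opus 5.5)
The lemma is a direct computation from two facts: $\ell$ is a homomorphism from the multiplicative group to the additive group, and $\sigma$ commutes with $\delta$. Both are built into the standing hypotheses of \S\ref{ss:parameterized}.

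\textbf{Step 1: $\sigma$ commutes with $\ell$.} For any invertible $y$ in a $\sigma\delta$-ring,
\[
\sigma(\ell(y))=\sigma\big(\delta(y)\,y^{-1}\big)=\delta(\sigma y)\,\sigma(y)^{-1}=\ell(\sigma(y)).
\]
This uses only that $\sigma$ is a ring endomorphism and that $\sigma\delta=\delta\sigma$.

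\textbf{Step 2: $\ell$ is multiplicative-to-additive.} By the Leibniz rule, $\ell(uv)=\ell(u)+\ell(v)$ and $\ell(u^{-1})=-\ell(u)$ for units $u,v$.

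\textbf{First assertion.} Take a coboundary $\alpha=\sigma(g)/g$ with $g\in K^\times$. By Steps 1 and 2,
\[
\ell(\alpha)=\ell(\sigma g)-\ell(g)=\sigma(\ell(g))-\ell(g)=(\sigma-1)\ell(g).
\]
Since $\ell(g)\in K$, this shows $\ell(\alpha)$ is $\sigma$-summable. By Step 2 the map $\ell$ is moreover a group homomorphism from $\B_\sigma(K)$ to $(\sigma-1)K$.

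\textbf{Second assertion.} Let $x$ satisfy $\sigma(x)=\alpha x$. We need $x$ to be invertible for $\ell(x)$ to make sense. In the parameterized Picard--Vessiot ring this holds because the inverse of the determinant is adjoined; in a general extension we add it as a standing assumption, or note that $\ell(x)$ lives in the total ring of fractions. Applying $\ell$ to both sides and using Steps 1 and 2 gives
\[
\sigma(\ell(x))=\ell(\sigma x)=\ell(\alpha x)=\ell(\alpha)+\ell(x),
\]
which is \eqref{eq:fundid}.

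The only point needing care is this invertibility of $x$. In the rank one setting it is automatic, because the Picard--Vessiot ring is $K\{x,1/x\}$. There is no real obstacle beyond that.
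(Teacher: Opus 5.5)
Your proof is correct and is essentially the paper's argument: the paper's one-line proof is exactly your computation $\sigma(\ell(x))=\delta(\sigma x)/\sigma(x)=\ell(\alpha x)=\ell(\alpha)+\ell(x)$, using $\sigma\delta=\delta\sigma$ and the Leibniz rule. Your treatment of the first assertion, applying the same computation to $\alpha=\sigma(g)/g$, spells out what the paper leaves implicit; your remark on invertibility is apt, but note that the fallback to the total ring of fractions still needs $x$ to be a non-zero-divisor, so the cleanest reading is that the statement tacitly assumes $x$ is a unit, as it is in $R=K\{x,1/x\}$.
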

\begin{proof}
$\sigma(\ell( x))=\dfrac{\delta(\sigma (x))}{\sigma x}=\dfrac{\delta(\alpha x)}
{\alpha x}=\dfrac{\delta(\alpha)}{\alpha}+\dfrac{\delta(x)}{x}$.
\end{proof}

\begin{theorem}\label{thm:main}
Let $R$ be a parameterized
Picard--Vessiot ring for equation \eqref{eq:rank1_general} over $K$ and let
$G=\dGal(R/K)\subseteq\Gm(k)$.
\begin{enumerate}
\item If $\Lc(\alpha)=0$ then $G=\Gm(k)$, $\ddim_kG=1$ and $\dtrdeg(Q(R)\mid K)=1$, the
solution $x\in R$ is $\delta$-transcendental over $K$.
\item If $\Lc(\alpha)=k[\delta]L_0$ with $L_0$ monic of order $r$, then
\[
G=\V\big(L_0(\ell y)\big)\subseteq\Gm,\qquad \ddim_kG=0,\qquad \ord(G)=r+1,
\]
and, choosing $g\in K$ with $L_0(\ell(\alpha))=\sigma(g)-g$, any solution $x\in R$ satisfies an explicit relation
\begin{equation}\label{eq:relation}
L_0\big(\ell(x)\big)-g\ \in\ k .
\end{equation}
\end{enumerate}
\end{theorem}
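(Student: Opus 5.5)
The strategy is to pin down $G$ by matching two descriptions. Cassidy's classification (Proposition \ref{prop:cassidy}) limits the possible shapes of $G$. The fundamental identity (Lemma \ref{lem:fund}) translates automorphisms of $R$ into statements about $\sigma$-summability of $L(\ell(\alpha))$. Since no power of $\alpha$ is a coboundary, the classical Galois group is $\Gm(k)$ by Proposition \ref{prop:rank1}. Its Zariski closure agrees with the classical group (\S\ref{ss:parameterized}), so $G$ is Zariski dense in $\Gm$. Cassidy then gives $G=\V(L(\ell y))$ for a unique monic $L\in k[\delta]$, or $L=0$, in which case $G=\Gm(k)$. It therefore suffices to show that the left ideal $\{L: G\subseteq \V(L(\ell y))\}$ coincides with $\Lc(\alpha)$ from Lemma \ref{lem:module}.

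\emph{Inclusion $\Lc(\alpha)\subseteq$ ideal of $G$.} Take $L\in\Lc(\alpha)$ and write $L(\ell\alpha)=\sigma(g)-g$. By \eqref{eq:fundid} and the commutation of $\sigma$ with $\delta$, the element $w=L(\ell(x))-g\in Q(R)$ satisfies $\sigma(w)=w$. Constants of $Q(R)$ under $\sigma$ equal $k$, a standard property of parameterized PV extensions (\cite{HS,HSS}), so $w\in k$. This proves \eqref{eq:relation}. Next take $\phi\in G$ with $\phi(x)=xy$. Then $\ell(\phi x)=\ell(x)+\ell(y)$, and $\phi$ fixes $g$ and $w$. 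Applying $\phi$ to $w$ gives $L(\ell(y))=0$, so $G\subseteq\V(L(\ell y))$.

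\emph{Reverse inclusion.} Suppose $G\subseteq\V(L(\ell y))$ for some $L\neq 0$. Set $w=L(\ell(x))\in Q(R)$. Every $\phi\in G$ sends $w$ to $w+L(\ell(y))=w$. By the Galois correspondence \cite[\S4]{HSS}, $w\in K$. Applying $\sigma-1$ and \eqref{eq:fundid} gives $\sigma(w)-w=L(\ell\alpha)$, so $L\in\Lc(\alpha)$. The two ideals are therefore equal. If $\Lc(\alpha)=0$, then $G=\Gm(k)$; if $\Lc(\alpha)=k[\delta]L_0$, then $G=\V(L_0(\ell y))$.

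\emph{Numerical invariants.} When $G=\Gm(k)$, $\ddim_k G=1$, and \eqref{eq:dimtrdeg} gives $\dtrdeg=1$, so $x$ is $\delta$-transcendental. In case (2), $\ell$ restricts to a surjection $\V(L_0(\ell y))\to\V(L_0)$ with kernel $C^\times$. Here $\V(L_0)$ is a $k$-vector space of dimension $r$, since $k$ is $\delta$-closed. Hence $\ddim_kG=0$. For the order, a generic point $y$ is determined by $y,\delta y,\dots,\delta^{r}y$: the relation $L_0(\ell y)=0$ expresses $\delta^{r+1}y$ algebraically in terms of lower derivatives, and these lower derivatives are independent. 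Thus $\ord(G)=r+1$.

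\emph{Main obstacle.} The delicate step is the reverse inclusion, specifically the claim that a $G$-invariant element of $Q(R)$ lies in $K$. This requires the parameterized Galois correspondence on the total ring of fractions, and $\sigma$-constants $=k$ for $Q(R)$ rather than just $R$. I would cite \cite[\S4]{HSS} for both facts. Checking the order count $r+1$ carefully against the definition $\trdeg_k k\langle G\rangle$ is routine.
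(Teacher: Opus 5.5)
Your proposal is correct and follows essentially the same route as the paper. It proves $\Lc(\alpha)\subseteq\{L: G\subseteq\V(L(\ell y))\}$ via the fundamental identity and $Q(R)^\sigma=k$, gets the reverse inclusion from the Galois correspondence $Q(R)^G=K$, combines Zariski density with Cassidy to pin down $G$, and counts the order at a generic point. One slip to fix: $\V(L_0)$ is an $r$-dimensional vector space over the constants $C=k^\delta$, not over $k$; as written it would contradict $\ddim_kG=0$, and the $C$-basis is also what the Wronskian argument for $\ord(G)=r+1$ uses.
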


\begin{proof}
\emph{Step 1. } Let $M\in\Lc(\alpha)$ and choose
$g\in K$ with $M(\ell(\alpha))=\sigma(g)-g$. Put $w =M(\ell(x))-g$. The coefficients of $M$ lie in
$k=K^\sigma$, so $M$ commutes with $\sigma$, and by Lemma \ref{lem:fund},
\[
\sigma(w)=M(\sigma(\ell x))-\sigma(g)=M(\ell(x) + \ell(\alpha))-\sigma(g)
=M(\ell(x))+M( \ell(\alpha))-\sigma(g)=w ,
\]
so $w\in Q(R)^\sigma=k$. If $\phi\in G$ and $\phi(x)=\lambda  x$ with
$\lambda\in k^\times$, then $\phi$ fixes $w\in k$, while
\[
\phi(w)=M\big(\ell(\lambda  x)\big)-g=M(\ell\lambda)+M(\ell  x)-g=M(\ell\lambda)+w .
\]
Hence $M(\ell\lambda)=0$ for all $\lambda\in G$ and all $M\in\Lc(\alpha)$; in
particular $G\subseteq\V(L_0(\ell(y)))$ and \eqref{eq:relation} holds.

\emph{Step 2.} If $M_1\in k[\delta]$ satisfies $G\subseteq\V(M_1(\ell(y)))$
then $M_1(\ell  x)$ is $G$-invariant, hence lies in $Q(R)^{G}=K$ by the Galois
correspondence \cite[Prop.~4.33]{HSS}; call it $g_1$. Applying $\sigma-1$ and
Lemma \ref{lem:fund} gives $M_1(\ell(\alpha))=\sigma(g_1)-g_1$, that is $M_1\in\Lc(\alpha)$. Thus
\begin{equation}\label{eq:star}
\{M_1\in k[\delta]:\ G\subseteq\V(M_1(\ell(y)))\}=\Lc(\alpha).
\end{equation}

\emph{Step 3.} Since no power of $\alpha$ is a $\sigma$-coboundary, the
classical Galois group of $\sigma(x)=\alpha x$ over $K$ is $\Gm(k)$ by
Proposition \ref{prop:rank1}. Hence $G$ is
Zariski dense in $\Gm(k)$ \cite[Prop.~4.38]{HSS}, so $G\supseteq\ker\ell=C^\times$ by
Proposition \ref{prop:cassidy}, and
therefore $G=\ell^{-1}(\ell(G))$ with $\ell(G)$ a $k$-$\delta$-subgroup of $\Ga$.

\emph{Step 4.} By Proposition \ref{prop:cassidy}, either $\ell(G)=\Ga$ or
$\ell(G)=\V(M_1)$ with $M_1\neq0$. If $\Lc(\alpha)=0$, then by \eqref{eq:star} no
nonzero $M_1$ satisfies $G\subseteq\V(M_1(\ell y))$, so $\ell(G)=\Ga(k)$ and $G=\Gm(k)$; the
statement on $\dtrdeg$ is \eqref{eq:dimtrdeg}. If $\Lc(\alpha)=k[\delta]L_0$, then
$\ell(G)\subseteq\V(L_0)$ by Step 1, so $\ell(G)=\V(M_1)$ with $\V(M_1)\subseteq\V(L_0)$;
by \eqref{eq:star}, $M_1\in\Lc(\alpha)$, that is $M_1=PL_0$ for some $P\in k[\delta]$, whence
$\V(M_1)\supseteq\V(L_0)$. Therefore $\ell(G)=\V(L_0)$ and $G=\V(L_0(\ell y))$.

\emph{Step 5.} Let $u_1,\dots,u_r$ be a $C$-basis of $\V(L_0)$; its Wronskian is
nonzero. A generic point $\lambda$ of $G$ satisfies $\ell(\lambda)=\sum_ic_iu_i$ with
$c_1,\dots,c_r$ independent constants, and the Wronskian matrix expresses the $c_i$ in
terms of $\ell(\lambda),\delta\ell(\lambda),\dots,\delta^{r-1}\ell(\lambda)$. Hence
$k\langle\lambda\rangle=k(\lambda,\delta\lambda,\dots,\delta^{r}\lambda)$ has transcendence
degree $r+1$ over $k$.
\end{proof}

\begin{corollary}[autonomous symbols over the two base fields]\label{cor:auton}
Let us consider equation \eqref{eq:rank1_general} with $\alpha\in k^\times$ not a root of
unity; in both cases below this is exactly the hypothesis that no power of $\alpha$ is a
$\sigma$-coboundary, since $\B_\sigma(K)\cap k^\times=\{1\}$ (for $K=k(t)$ see
Lemma \ref{lem:cob}; for $K=k$ it is obvious, $\sigma$ being the identity on $k$).
\begin{enumerate}
\item \emph{Assume $K = k(t)$}, every $g\in k$ is summable, since $g=(\sigma-1)(t g)$.
Hence $L_0=1$, $r=0$,
\[
G=\Gm(C)=\{\lambda:\delta\lambda=0\},\qquad \ord(G)=1,
\]
and the relation \eqref{eq:relation} reads $\ell(x)=t\,\ell(\alpha) +v$ with $v\in k$.
\item \emph{Assume $K=k$}, here $(\sigma-1)K=0$, so $\Lc(\alpha)=\Ann_{k[\delta]}(\ell(\alpha))$.
If $\delta(\alpha)\neq0$ then $L_0=\delta-\ell(\ell(\alpha))$, $r=1$,
\[
G=\V\big(\ell(\alpha)\,\delta(\ell y)-\delta(\ell(\alpha))\,\ell(y)\big)
=\big\{\lambda\in\Gm(k):\ \ell(\lambda)\in C\cdot \ell(\alpha)\big\},\qquad \ord(G)=2 ,
\]
and, since $L_0(\ell(\alpha))=0$ and $(\sigma-1)K=0$ allow the choice $g=0$, the relation
\eqref{eq:relation} reads
\[
\big(\delta-\ell(\ell(\alpha))\big)\,\ell(x)\in k,
\qquad\text{equivalently}\qquad
\delta\left(\frac{\ell(x)}{\ell(\alpha)}\right)\in k .
\]
If $\delta(\alpha)=0$ then $\ell(\alpha)=0$, $\Lc=k[\delta]$ and $G=\Gm(C)$.
\end{enumerate}
\end{corollary}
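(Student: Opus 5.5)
The plan is to apply Theorem \ref{thm:main} in each case: compute the left ideal $\Lc(\alpha)$ of $k[\delta]$, read off its monic generator $L_0$, and then take $G=\V(L_0(\ell y))$ and the relation \eqref{eq:relation}. First the hypothesis must be checked. If $\alpha\in k^\times$ is not a root of unity, then $\alpha^m\in k^\times\setminus\{1\}$ for every $m\geq1$. Since $\B_\sigma(K)\cap k^\times=\{1\}$, no power of $\alpha$ is a coboundary. For $K=k$ this intersection fact is immediate, because $\sigma(g)/g=1$ for all $g$. For $K=k(t)$ we invoke Lemma \ref{lem:cob}. So \eqref{eq:rank1_general} holds. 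We also record that $\ell(\alpha)=\delta\alpha/\alpha\in k$, because $k$ is a $\delta$-subfield of $K$ ($\delta$ commutes with $\sigma$, so it preserves $K^\sigma$).

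\emph{Case $K=k(t)$.} Here $\sigma(t)=t+1$ and $\sigma$ is trivial on $k$. For any $g\in k$ we have $(\sigma-1)(tg)=(t+1)g-tg=g$. In particular $\ell(\alpha)=(\sigma-1)(t\,\ell(\alpha))$, so $1\in\Lc(\alpha)$ and $L_0=1$, $r=0$. Theorem \ref{thm:main}(2) then gives $G=\V(\ell y)=\{\lambda:\delta\lambda=0\}=\Gm(C)$, with $\ord(G)=r+1=1$. We choose $g=t\,\ell(\alpha)$ in \eqref{eq:relation}, which yields $\ell(x)-t\,\ell(\alpha)=v\in k$. One small point must be checked: the $\delta$ on $k(t)$ satisfies $\delta t=0$, or at least $\delta t\in k$. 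This follows from the paper's standing convention that $\sigma$ and $\delta$ commute. Indeed, $\sigma(\delta t)=\delta(t+1)=\delta t$, so $\delta t\in k$. The equality $(\sigma-1)(tg)=g$ only uses $\sigma$, so that point is not needed for $L_0=1$.

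\emph{Case $K=k$.} Here $\sigma=\mathrm{id}$, so $(\sigma-1)K=0$ and $\Lc(\alpha)=\Ann_{k[\delta]}(\ell(\alpha))$ is the annihilator of the element $\ell(\alpha)\in k$. If $\delta\alpha=0$, then $\ell(\alpha)=0$, so $\Lc=k[\delta]$ and $L_0=1$. This gives $G=\Gm(C)$ as before, and the relation is trivial with $g=0$. If $\delta\alpha\neq0$, write $u=\ell(\alpha)\neq0$. The order-one operator $\delta-\ell(u)$ kills $u$, since $\delta u-(\delta u/u)u=0$. No nonzero order-zero operator $c$ kills $u$, because $cu\neq0$ for $c\neq0$. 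Since $\Lc$ is principal and contains no nonzero element of order zero, its monic generator has order exactly one. The generator therefore divides $\delta-\ell(u)$, and comparing orders gives $L_0=\delta-\ell(u)$, $r=1$, $\ord(G)=2$. To describe the group, multiply $L_0(\ell y)=0$ by $u$ to get $u\,\delta(\ell y)-\delta(u)\,\ell(y)=0$. This is the same as $\delta(\ell(y)/u)=0$, that is, $\ell(y)\in C\cdot u$. Finally, taking $g=0$ in \eqref{eq:relation} gives $(\delta-\ell(u))\ell(x)\in k$. Dividing by $u$ turns this into $\delta(\ell(x)/u)\in k$.

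No step is a serious obstacle. The only points needing care are the coboundary intersection for $k(t)$, which is delegated to Lemma \ref{lem:cob}, and the order argument identifying $L_0$ in the case $K=k$.
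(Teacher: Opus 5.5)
Your proposal is correct and follows essentially the same route as the paper: compute $\Lc(\alpha)$ in each case (via $\ell(\alpha)=(\sigma-1)(t\,\ell(\alpha))$ over $k(t)$, and as the annihilator of $\ell(\alpha)$ over $k$), then read off $G=\V(L_0(\ell y))$ and the relation \eqref{eq:relation} from Theorem \ref{thm:main}. The only difference is that you spell out details the paper leaves implicit. These are the check of the non-coboundary hypothesis and the order argument showing that the annihilator is generated by $\delta-\ell(\ell(\alpha))$.
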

\begin{proof}
$\ell(\alpha)\in k$, so $M(\ell(\alpha))\in k$ for every $M\in k[\delta]$. Part (1) is the displayed identity. In
(2), $\Lc(\alpha)$ is the annihilator of $\ell(\alpha)$ in $k[\delta]$, which is
$k[\delta](\delta-\ell(\ell(\alpha)))$ for $\ell(\alpha) \neq0$ and $k[\delta]$ for $\ell(\alpha)=0$; 
and $\V(\delta-\ell(\ell(\alpha)))=C\cdot \ell(\alpha)$, because $k^\delta=C$, whence the description of $G$. The two forms of the relation agree because
$\big(\delta-\ell(\ell(\alpha))\big)u=\ell(\alpha)\,\delta(u/\ell(\alpha))$ and $\ell(\alpha)\in k^\times$.
\end{proof}


\begin{example}[one automaton, two groups]\label{ex:twobases}
Take the symbol $\hat\alpha=z+z^{-1}$ of the Fourier transform of Wolfram's rule 90, for which
\[
b:=\ell(\hat\alpha)=\frac{z^{2}-1}{z^{2}+1},\qquad
\ell(b)=\ell(\ell(\hat\alpha))=\frac{4z^{2}}{z^{4}-1}.
\]
Over $K=\C(z)^{\rm diff}(t)$ one gets $G=\Gm(\C)$ of order $1$ and the relation
$\ell(\hat x)=t\,b$, which under $z=e^{i\theta}$ is
$\partial_\theta\hat x/\hat x=-t\tan\theta$. Over $K= \C(z)^{\rm diff}$ one gets instead
\[
G=\Big\{\lambda:\ \delta(\ell\lambda)=\tfrac{4z^{2}}{z^{4}-1}\,\ell(\lambda)\Big\}
=\big\{\mu\,\hat\alpha^{\,c}:\ \mu\in\C^\times,\ c\in\C\big\},
\]
of order $2$, where $\hat\alpha^{\,c}$ denotes any element of $k$ with
$\ell(\hat\alpha^{\,c})=c\,\ell(\hat\alpha)$, which exists in $k$ because it is $\delta$-closed. 
It is unique up to a factor in $\C^\times$, and it lies in $\C(z)$ only for $c\in\Z$. We have
checked that $\ell(\mu\hat\alpha^{c})=c\,b$ and that these $\lambda$ satisfy the Wronskian
equation. 

\end{example}

\subsection{The trichotomy}

\emph{From here to the end of the paper we focus on the difference equation \eqref{eq:rank1} of the Fourier transform of cellular automata with coefficients in $\kappa = \C(t)$. Thus we apply the general theory developed in \S\ref{ss:parameterized} to the case $k = \C(z)^{\rm diff}$, the differential closure of $(\C(z),\delta)$ with $\delta=z\,d/dz$, and $K = k(t)$, where $\sigma$ acts as the identity on $k$ and by $\sigma(t)=t+1$, and $\delta$ is extended to $K$ by $\delta(t)=0$. Then $\sigma\delta=\delta\sigma$, $K^\sigma=k$ and $k^\delta=\C$.}

As the field $k$ is algebraically closed, for $\hat\alpha \in k(t)^\times$ we have a factorization:
\begin{equation}\label{eq:factor}
\hat\alpha=\gamma\prod_{i=1}^{N}(t-\rho_i)^{e_i},
\qquad \gamma\in k^\times,\ \rho_i\in k\ \text{distinct},\ e_i\in\Z\setminus\{0\},
\end{equation}
and then $\hat\alpha$ is determined, up to the multiplicative constant $\gamma$, by its divisor
$$\operatorname{div}(\hat\alpha) = \sum_{i=1}^N e_i\rho_i.$$
Note that we can also consider the action of $\sigma$ on divisors, by setting 
$$\sigma^{*} \left(\sum_{i=1}^N e_i\rho_i \right) =   \sum_{i=1}^N e_i(\rho_i-1)$$
so that $$\operatorname{div}(\sigma(\hat\alpha)) = \sigma^*(\operatorname{div}(\hat\alpha)).$$

As the number of zeroes and poles of $\hat\alpha$ is finite, each $\Z$-orbit $O\subseteq\{\rho_i\}$ has a first element $\rho_O$ and any other element of $O$ is of the form $\rho_O + m$ for some positive integer $m$. Here and below, an \emph{orbit} means the intersection of a $\Z$-orbit with the finite set
$\{\rho_i\}$. Since $\delta(\rho+m)=\delta\rho$ for $m\in\Z$, all elements of an orbit $O$ have
the same derivative $d_O:=\delta\rho_O$; we call $O$
\emph{moving} if $d_O\neq0$, and we set $s_O=\sum_{\rho_i\in O}e_i$.

\begin{lemma}[coboundaries over $k(t)$]\label{lem:cob}
With the notation of \eqref{eq:factor},
\[
\exists\,m\geq1:\ \hat\alpha^{\,m}\in\B_\sigma(K)
\iff
\big(s_O=0\ \text{for every orbit }O\big)\ \text{and}\ \gamma\ \text{is a root of unity}.
\]
In particular $\B_\sigma(K)\cap k^\times=\{1\}$.
\end{lemma}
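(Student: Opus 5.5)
We prove the two implications separately, treating $K=k(t)$ as the field of rational functions over the algebraically closed field $k$. On $K$ the shift acts by translation of roots, $\sigma(t-\rho)=t-(\rho-1)$, and is the identity on $k$.

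\emph{Sufficiency.} Assume $s_O=0$ for every orbit and $\gamma^m=1$. Write each orbit as $O=\{\rho_O+m_j\}$ with integers $m_j\ge 0$ and exponents $e_j$ summing to zero. We will show that $\prod_j(t-\rho_O-m_j)^{e_j}$ is a coboundary. The key observation is
\[
\frac{t-\rho-m}{t-\rho}=\frac{\sigma(h)}{h},\qquad h=\prod_{i=1}^{m}(t-\rho-i)^{-1},
\]
which holds since $\sigma(h)/h$ telescopes: the factors are shifted down by one. Because $\sum_j e_j=0$, we can rewrite $\prod_j(t-\rho_O-m_j)^{e_j}=\prod_j\big((t-\rho_O-m_j)/(t-\rho_O)\big)^{e_j}$, and each factor is a coboundary. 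Coboundaries form a multiplicative group, so $\hat\alpha^m=\gamma^m\cdot(\text{coboundary})^m$ is a coboundary.

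\emph{Necessity.} Suppose $\hat\alpha^m=\sigma(g)/g$ with $g\in K^\times$. Factor $g=c\prod_\tau(t-\tau)^{f_\tau}$. Comparing leading coefficients in $t$ shows that $\sigma(g)/g$ has leading coefficient $1$, so $\gamma^m=1$. Next compare divisors:
\[
m\operatorname{div}(\hat\alpha)=\sigma^*\operatorname{div}(g)-\operatorname{div}(g).
\]
This identity respects the decomposition of divisors into $\Z$-orbits, since $\sigma^*$ preserves each $\Z$-coset of $k$. The degree map (sum of coefficients) on the part of a divisor supported in one $\Z$-coset is invariant under $\sigma^*$. Hence the right-hand side has total degree zero on each coset, so $m\,s_O=0$ and therefore $s_O=0$. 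Here we use that a $\Z$-coset meets $\{\rho_i\}$ exactly in the orbit $O$, which is the definition of orbit used in the paper. The main care needed is bookkeeping: cosets containing no $\rho_i$ contribute nothing on the left, and the first-element ordering within an orbit plays no role in this direction.

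\emph{Final claim.} Let $\lambda\in k^\times$ be a coboundary. Then $N=0$, and the necessity argument with $m=1$ gives $\lambda=\gamma=1$, since the leading coefficient of $\sigma(g)/g$ is $1$. Hence $\B_\sigma(K)\cap k^\times=\{1\}$. The only mildly delicate point in the whole argument is the leading-coefficient argument and the coset-by-coset degree count. Both are routine once $g$ is fully factored over the algebraically closed field $k$.
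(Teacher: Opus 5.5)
Your proof is correct and follows the paper's. Necessity of $s_O=0$ comes from counting the degree of $\sigma^*D-D$ along each $\Z$-coset, and sufficiency comes from an explicit telescoping product; your product of the $h$'s has the same divisor, $f_n=\sum_{j<n}e_{\rho+j}$, as the paper's $g_0$. The one variation is how you handle constants: you get $\gamma^m=1$ and $\B_\sigma(K)\cap k^\times=\{1\}$ from the fact that $\sigma(g)/g$ always has leading coefficient $1$ in $t$, whereas the paper reruns the divisor computation to show $g\in k^\times$ and hence $c=1$; both work, and yours is slightly more direct.
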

\begin{proof}
The divisor in $t$ of $\sigma(g)/g$ is $\sigma^{*}D-D$ with $D=\operatorname{div}(g)$. If
$D$ has multiplicity $f_n$ at $\rho+n$, that of $\sigma^{*}D-D$ is $f_{n+1}-f_n$, whose
sum along each orbit vanishes because $f$ has finite support. Hence
$\hat\alpha^{m}\in\B_\sigma(K)$ forces $m\,s_O=0$, that is $s_O=0$, for every orbit.
Conversely if $s_O=0$ for every orbit, taking $f_n:=\sum_{j<n}e_{\rho+j}$ produces
$g_0$ with $\sigma(g_0)/g_0=\prod_i(t-\rho_i)^{e_i}$, so that
$\hat\alpha^m\in\B_\sigma(K)$ if and only if $\gamma^m\in\B_\sigma(K)\cap k^\times$.
Finally if $\sigma(g)/g=c\in k^\times$ the same divisor computation gives
$g\in k^\times$ and $c=1$.
\end{proof}

\begin{lemma}[summability of rational functions \cite{Abramov}]\label{lem:abramov}
Let $\Omega$ be a field of characteristic zero with $\sigma|_\Omega=\mathrm{id}$ and let $\sigma\colon \Omega(t)\to \Omega(t)$ be the shift operator $\sigma(t)=t+1$. Let 
$f\in\Omega(t)$ have partial fraction decomposition
$$f=p(t)+\sum_{O}\sum_{n}\sum_{j\geq1} \frac{c^{O}_{n,j}}{(t-\rho_O-n)^{j}},$$ where $O$ runs over the
$\Z$-orbits of poles. Then $f\in(\sigma-1)\Omega(t)$ if and only if
$\sum_nc^{O}_{n,j}=0$ for every orbit $O$ and every $j\geq1$.
\end{lemma}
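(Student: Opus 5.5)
We prove the classical criterion of Abramov for rational summability: $f\in\Omega(t)$ is $\sigma$-summable if and only if, for every orbit $O$ of poles and every order $j\geq1$, the orbit residues $\sum_n c^O_{n,j}$ vanish. The plan is to work orbit by orbit and order by order in the partial fraction decomposition, using the fact that $\sigma-1$ maps $\Omega[t]$ onto $\Omega[t]$. We treat the two directions separately: sufficiency by building a sum explicitly, necessity by a linear functional that kills every image of $\sigma-1$.

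\emph{Sufficiency.} First dispose of the polynomial part. The map $\sigma-1$ lowers degree by exactly one on $\Omega[t]$ (the leading term of $(t+1)^{d+1}-t^{d+1}$ is $(d+1)t^d$, and $d+1\neq 0$ in characteristic zero). Hence it maps $\Omega[t]$ onto $\Omega[t]$, and $p$ is summable. For the fractional part, write $u_n=(t-\rho_O-n)^{-j}$, so that $\sigma(u_n)=u_{n-1}$. Thus $u_n-u_{n-1}=(1-\sigma)u_n$ is summable, and every $u_n$ is congruent modulo $(\sigma-1)\Omega(t)$ to $u_0$. Consequently $\sum_n c^O_{n,j}u_n\equiv\big(\sum_n c^O_{n,j}\big)u_0$, which is $0$ when the orbit sum vanishes. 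Concretely, a telescoping sum with finitely many terms produces the preimage.

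\emph{Necessity.} For each orbit $O$ and order $j$, define the $\Omega$-linear functional
\[
\Phi_{O,j}(f)=\sum_n c^O_{n,j}(f),
\]
the sum of the coefficients of $(t-\rho_O-n)^{-j}$ in the partial fraction decomposition. This is well defined because partial fraction decompositions over $\Omega$ are unique. We may pass to the algebraic closure, or group the fractions by irreducible factors and their shifts; the coefficients are then read off in $\bar\Omega$. Uniqueness of the decomposition makes this harmless, since $\sigma$ commutes with extension of scalars.

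We then check that $\Phi_{O,j}(\sigma g-g)=0$ for every $g\in\Omega(t)$. Decompose $g$ by partial fractions. The operator $\sigma$ sends the $(t-\rho_O-n)^{-j}$ term to the $(t-\rho_O-(n-1))^{-j}$ term with the same coefficient, so it shifts the index $n$ within each orbit and preserves the order $j$. It also maps polynomials to polynomials. Hence $\Phi_{O,j}(\sigma g)=\Phi_{O,j}(g)$, because each coefficient sequence has finite support and reindexing does not change its sum. Linearity of $\Phi_{O,j}$ then gives the claim. Therefore, if $f$ is summable, all orbit sums vanish.

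\emph{Main obstacle.} The only delicate point is the bookkeeping when $\Omega$ is not algebraically closed. In the present application $\Omega=k$ is algebraically closed, so there is no issue. In general we would argue over $\bar\Omega$ and use that $\Omega(t)\cap(\sigma-1)\bar\Omega(t)=(\sigma-1)\Omega(t)$. This follows by averaging a preimage over a basis: take an $\Omega$-linear projection $\bar\Omega\to\Omega$, which commutes with $\sigma$ on coefficients. Everything else is a direct telescoping computation.
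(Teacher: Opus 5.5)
Your proposal is correct and follows the paper's argument: $\sigma-1$ is surjective on $\Omega[t]$ and preserves the partial fraction decomposition, acting on each block $(O,j)$ as shift minus identity on the finitely supported coefficient sequence. Your telescoping argument (sufficiency) and your shift-invariant functional $\Phi_{O,j}$ (necessity) are exactly the two inclusions showing that the image of that operator is the set of zero-sum sequences; your descent remark for non-algebraically-closed $\Omega$, via an $\Omega$-linear projection $\bar\Omega\to\Omega$ fixing $\Omega$, is a valid supplement that the paper leaves implicit.
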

\begin{proof}
$\sigma-1$ is surjective on $\Omega[t]$ in characteristic zero and preserves the
decomposition; on the block $(O,j)$ it acts as ``shift minus identity'' on the finitely
supported sequence $(c_{n,j})_n$, whose image is exactly the set of sequences of zero sum.
\end{proof}

\begin{lemma}\label{lm:dicho1}
Let $\hat\alpha\in K^\times$ be as in \eqref{eq:factor}. Then
\[
\Lc(\hat\alpha)=
\begin{cases}
k[\delta] & \text{if } s_O=0 \text{ for every moving orbit } O,\\[2pt]
0 & \text{otherwise.}
\end{cases}
\]
\end{lemma}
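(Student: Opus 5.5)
We compute $\ell(\hat\alpha)$ explicitly from \eqref{eq:factor} and then test summability with Lemma \ref{lem:abramov}, applied with $\Omega=k$. Since $\delta t=0$ and $\delta$ acts on $k$, we get
\[
\ell(\hat\alpha)=\ell(\gamma)-\sum_{i=1}^N e_i\,\frac{\delta\rho_i}{t-\rho_i}
=\ell(\gamma)-\sum_{O}d_O\sum_{\rho_i\in O}\frac{e_i}{t-\rho_i}.
\]
Here $\ell(\gamma)\in k$ is a polynomial part, hence summable by Lemma \ref{lem:abramov}; concretely, $\ell(\gamma)=(\sigma-1)(t\,\ell(\gamma))$. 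Each orbit contributes simple poles only ($j=1$), with residue sum $-d_O s_O$. By Lemma \ref{lem:abramov}, $\ell(\hat\alpha)$ is summable exactly when $d_Os_O=0$ for every orbit, that is, when $s_O=0$ for every moving orbit. In that case $1\in\Lc(\hat\alpha)$, so $\Lc(\hat\alpha)=k[\delta]$ by Lemma \ref{lem:module}.

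For the converse, suppose some moving orbit $O$ has $s_O\neq0$. We must show that no nonzero $L=\sum_{j=0}^r c_j\delta^j\in k[\delta]$ with $c_r\neq0$ makes $L(\ell(\hat\alpha))$ summable. The key computation is the effect of $\delta$ on a simple fraction:
\[
\delta\Big(\frac{a}{t-\rho}\Big)=\frac{\delta a}{t-\rho}+\frac{a\,\delta\rho}{(t-\rho)^2}.
\]
By induction, $\delta^j\big(a/(t-\rho)\big)$ has pole order exactly $j+1$ at $\rho$, with leading coefficient $j!\,a\,(\delta\rho)^j$ on $(t-\rho)^{-(j+1)}$. Applying this to $a=-d_Oe_i$ and $\rho=\rho_i\in O$, the coefficient of $(t-\rho_i)^{-(r+1)}$ in $L(\ell(\hat\alpha))$ is $-c_r\,r!\,e_i\,d_O^{\,r+1}$. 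Lower-order terms of $L$ and $\ell(\gamma)$ contribute only lower pole orders.

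All elements of $O$ share the same $d_O$, so the sum of these top coefficients along $O$ is $-c_r\,r!\,d_O^{\,r+1}s_O$. This is nonzero, since $c_r\neq0$, $d_O\neq0$ and $s_O\neq0$. The $(O,r+1)$ condition of Lemma \ref{lem:abramov} therefore fails, so $L(\ell(\hat\alpha))$ is not summable and $\Lc(\hat\alpha)=0$.

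The main care point is the bookkeeping in the partial fraction expansion. We must confirm that $\delta$ preserves the decomposition orbit by orbit: the poles stay at the $\rho_i$, with no new poles, and distinct orbits do not interact. We must also confirm that only the top-order term $c_r\delta^r$ reaches pole order $r+1$. Both follow from the displayed formula for $\delta$ acting on simple fractions.
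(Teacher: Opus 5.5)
Your proof is correct and follows essentially the same route as the paper. You write $\ell(\hat\alpha)$ in partial fractions with simple poles of residue $-e_i\,\delta(\rho_i)$ and use Lemma~\ref{lem:abramov} to get $1\in\Lc(\hat\alpha)$ when every moving orbital sum vanishes. Otherwise you track the coefficient of $(t-\rho_i)^{-(r+1)}$ in $L(\ell(\hat\alpha))$, whose orbital sum $-c_r\,r!\,d_O^{\,r+1}s_O$ is nonzero. One small point: your claim that the pole order is ``exactly $j+1$'' requires $a\,\delta\rho\neq0$, which does hold in the moving-orbit case where you apply it.
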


\begin{proof}
From \eqref{eq:factor},
\begin{equation}\label{eq:bexpl}
b=\ell(\hat\alpha)=\ell(\gamma)-\sum_{i=1}^{N}\frac{e_i\,\delta(\rho_i)}{t-\rho_i}.
\end{equation}
Thus $b$ has only simple poles in $t$, with residues $-e_i\delta(\rho_i)$, which vanish at
the $\delta$-constant poles, and $\ell(\gamma)\in k$ has no pole in $t$.

Suppose that $s_O=0$ for every moving orbit. Then $\ell(\gamma)=(\sigma-1)(t\ell\gamma)$ 
is summable, and for a moving orbit $O$ with common value $d_O=\delta\rho\neq0$ the orbital
sum of residues is $-d_O\,s_O=0$, so that part is summable by Lemma \ref{lem:abramov}.
The $\delta$-constant orbits do not contribute. Hence $b\in(\sigma-1)K$ and
$1\in\Lc(\hat\alpha)$.

Suppose now that $s_O\neq0$ for some moving orbit $O$, and let
$A=\sum_{j=0}^{r}c_j\delta^j$ with $c_r\neq0$. From
$\delta\big(c(t-\rho)^{-1}\big)=\delta(c)(t-\rho)^{-1}+c\,\delta(\rho)(t-\rho)^{-2}$ one
gets by induction that the coefficient of $(t-\rho)^{-(j+1)}$ in
$\delta^{j}\big(c(t-\rho)^{-1}\big)$ equals $j!\,c\,\delta(\rho)^{j}$. Applying this to
\eqref{eq:bexpl} with $c=-e_i\delta(\rho_i)$, the coefficient of $(t-\rho_i)^{-(r+1)}$ in
$A(b)$ is $-c_r\,r!\,e_i\,d_O^{\,r+1}$ for $\rho_i\in O$, and nothing else in
\eqref{eq:bexpl} contributes to it. The orbital sum of the coefficients of order $r+1$ is therefore
$-c_r\,r!\,d_O^{\,r+1}\,s_O\ \neq\ 0$.
Since $\delta^j$ raises the order of a simple pole by at most $j$, the order $r+1$ is the
largest one occurring, and Lemma \ref{lem:abramov} gives $A(b)\notin(\sigma-1)K$. As
$A\neq0$ was arbitrary, $\Lc(\hat\alpha)=0$. 
\end{proof}

\begin{theorem}[trichotomy]\label{thm:tricho}
Let $\hat\alpha\in K^\times$ be as in \eqref{eq:factor}. Then the difference equation \eqref{eq:rank1}
falls in one of the three mutually excluding cases:
\begin{enumerate}
\item $s_O = 0$ for every orbit $O$, moving or not, and $\gamma$ is a root of unity, of
exact order $m$. Then $\hat \alpha^m \in \B_{\sigma}(K)$, $\Lc(\hat\alpha) = k[\delta]$, and
the $\delta$-Galois group of \eqref{eq:rank1} is the group $\mu_m$ of $m$-th roots of unity.
\item The above case does not hold, but $s_O = 0$ for every \emph{moving} orbit $O$. Then
$\Lc(\hat\alpha) = k[\delta]$, and the $\delta$-Galois group of \eqref{eq:rank1} is the
multiplicative group of constants $\Gm(\C)$.
\item $s_O\neq0$ for some moving orbit $O$. Then $\Lc(\hat\alpha) = 0$, the $\delta$-Galois
group of \eqref{eq:rank1} is the multiplicative group $\Gm(k)$, and every solution $\hat x$
is $\delta$-transcendental over $K$.
\end{enumerate}
\end{theorem}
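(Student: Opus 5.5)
The plan is to combine Lemma \ref{lem:cob}, Lemma \ref{lm:dicho1}, Proposition \ref{prop:rank1} and Theorem \ref{thm:main}, organised by the conditions on the orbits. The three cases are mutually exclusive and exhaustive by construction. Case (3) is the negation of ``$s_O=0$ for every moving orbit''. Case (2) assumes that condition but excludes case (1), and case (1) is a strengthening of it.

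\emph{Case (1).} By Lemma \ref{lem:cob}, the hypotheses give $\hat\alpha^{m'}\in\B_\sigma(K)$ for some $m'$. To identify the exact order of $[\hat\alpha]$ in $K^\times/\B_\sigma(K)$, I follow the proof of Lemma \ref{lem:cob}. There $\hat\alpha=\gamma\,\sigma(g_0)/g_0$, so $\hat\alpha^j\in\B_\sigma(K)$ if and only if $\gamma^j\in\B_\sigma(K)\cap k^\times=\{1\}$. Hence the order is exactly the order $m$ of $\gamma$. Proposition \ref{prop:rank1} then gives the classical group $\mu_m$. Since the $\delta$-Galois group is Kolchin closed with Zariski closure $\mu_m$ (by \cite[Prop.~4.38]{HSS}), and a finite set is already Zariski closed, the $\delta$-group is $\mu_m$ as well. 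Alternatively, the gauge change $\hat x=g_0 y$ reduces the equation to $\sigma(y)=\gamma y$, whose solution $y$ satisfies $y^m\in k$. Finally, $\Lc(\hat\alpha)=k[\delta]$ follows from Lemma \ref{lm:dicho1}, because every moving orbit has $s_O=0$.

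\emph{Cases (2) and (3).} In both cases no power of $\hat\alpha$ is a coboundary. In case (3) this holds because some $s_O\neq0$. In case (2) it holds because either some orbit, necessarily non-moving, has $s_O\neq0$, or $\gamma$ is not a root of unity; either way Lemma \ref{lem:cob} applies. So hypothesis \eqref{eq:rank1_general} is satisfied and Theorem \ref{thm:main} applies.
\begin{itemize}
\item In case (2), Lemma \ref{lm:dicho1} gives $\Lc(\hat\alpha)=k[\delta]$, so $L_0=1$ and $r=0$. Theorem \ref{thm:main}(2) then gives $G=\V(\ell y)=\{\lambda:\delta\lambda=0\}=\Gm(\C)$.
\item In case (3), Lemma \ref{lm:dicho1} gives $\Lc(\hat\alpha)=0$. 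Theorem \ref{thm:main}(1) then gives $G=\Gm(k)$ and that every solution is $\delta$-transcendental. The statement ``every solution'' follows because any two nonzero solutions differ by a factor in $k^\times$, and any solution in $R$ is a multiple of the fundamental one.
\end{itemize}

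The only delicate point is case (1): one must check that the exact order of the class is $m$ rather than a proper divisor of it, and that the parameterized group coincides with the classical one there. The first part is the computation above. For the second, a finite Zariski closure forces equality with the Kolchin closed subgroup.
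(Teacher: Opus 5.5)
Your proposal is correct and follows the paper's proof essentially step by step. Exclusivity comes from the nesting of the conditions. Case (1) uses Lemma \ref{lem:cob}, Proposition \ref{prop:rank1} and Zariski density of the $\delta$-group in the finite group $\mu_m$. Cases (2) and (3) use Lemma \ref{lem:cob}, Lemma \ref{lm:dicho1} and Theorem \ref{thm:main}. Your extra argument that the order of $[\hat\alpha]$ is exactly $m$, obtained by writing $\hat\alpha=\gamma\,\sigma(g_0)/g_0$ and using $\B_\sigma(K)\cap k^\times=\{1\}$, makes explicit a minimality claim that the paper states without justification.
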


\begin{proof}
The three cases are mutually exclusive and exhaust all possibilities, because the
condition of case (1) implies that of case (2). In case (1), $\hat\alpha^m\in\B_\sigma(K)$ by
Lemma \ref{lem:cob}, and $m$ is minimal with this property, so the classical Galois group is
$\mu_m$ by Proposition \ref{prop:rank1}; the $\delta$-Galois group is Zariski dense in it
\cite[Prop.~4.38]{HSS}, hence equal to it, $\mu_m$ being finite. The value of $\Lc(\hat\alpha)$
is given by Lemma \ref{lm:dicho1}. In cases (2) and (3) no power of $\hat\alpha$ is a
$\sigma$-coboundary, by Lemma \ref{lem:cob}, so Theorem \ref{thm:main} applies: in case (2)
$\Lc(\hat\alpha)=k[\delta]$ by Lemma \ref{lm:dicho1}, so $L_0=1$ and
$G=\V(\ell y)=\Gm(\C)$; in case (3) $\Lc(\hat\alpha)=0$ by Lemma \ref{lm:dicho1} and
$G=\Gm(k)$.
\end{proof}

\subsection{Separability}

For genuine cellular automata the trichotomy becomes a dichotomy, and a criterion that can be read off the
symbol. Write $\hat\alpha=N/d$ with $N\in\C[t][z^{\pm1}]$ and $d\in\C[t]$ a common
denominator of the coefficients.

\begin{lemma}[structure of the roots]\label{lem:roots}
With the notation of \eqref{eq:factor}:
\begin{enumerate}[label=\textup{(\roman*)},leftmargin=2.4em,topsep=2pt,itemsep=2pt]
\item the roots with $e_i<0$ are roots of $d$, hence lie in $\C$ and are $\delta$-constant;
\item no $\Z$-orbit contains both a moving and a $\delta$-constant root;
\item consequently every moving orbit satisfies $s_O\geq1>0$.
\end{enumerate}
\end{lemma}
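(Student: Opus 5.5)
The plan is to read off the factorization \eqref{eq:factor} directly from $\hat\alpha=N/d$. Here $N\in\C[t][z^{\pm1}]$, and after clearing the monomial factor in $z$ we may regard $N$ as a polynomial in $t$ with coefficients in $\C[z^{\pm1}]\subset k$. Over the algebraically closed field $k$ it factors as $N=c(z)\prod_j(t-\tau_j)$, with $\tau_j\in k$ algebraic over $\C(z)$. The denominator $d\in\C[t]$ factors as $d=c'\prod_l(t-\eta_l)$ with $\eta_l\in\C$.

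\textbf{Part (i).} A root with $e_i<0$ is a pole of $\hat\alpha$ in $t$. After cancellation it must be a root of $d$, so it lies in $\C$, and $\delta$ kills $\C$.

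\textbf{Part (ii).} This is the key step. Suppose $\rho$ is moving and $\rho+m\in\C$ for some $m\in\Z$. Then $\delta(\rho)=\delta(\rho+m)=0$, which contradicts $\rho$ being moving. So an orbit whose elements differ by integers has a common derivative $d_O$, and it is either entirely moving or entirely $\delta$-constant. This is immediate from $\delta(m)=0$, and I expect no real obstacle. The only subtlety is that the paper defines orbits as intersections of $\Z$-orbits with the finite root set, which changes nothing here.

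\textbf{Part (iii).} Take a moving orbit $O$. By (ii) every element of $O$ is moving. By (i) a moving root cannot have $e_i<0$, so all $e_i\geq1$ on $O$. Since $O$ is nonempty, $s_O=\sum_{\rho_i\in O}e_i\geq1$.

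The only point needing care is in (i). A root of $N$ that is also a root of $d$ could cancel, and then $e_i$ is just the net multiplicity. Any root with negative net multiplicity must still come from $d$, hence lies in $\C$, so the argument goes through.
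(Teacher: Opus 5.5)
Your proposal is correct and follows the paper's proof essentially step for step. Poles in $t$ can only come from $d\in\C[t]$, integer translates have the same $\delta$-derivative, and so a moving orbit consists only of roots of $N$ with $e_i\geq1$; your remark on cancellation between $N$ and $d$ just spells out the paper's phrase ``after reducing the fraction''.
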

\begin{proof}
(i) After reducing the fraction, the poles in $t$ can only come from $d\in\C[t]$. (ii) If
$\rho-\rho'\in\Z$ then $\delta\rho=\delta\rho'$. (iii) By (i) and (ii) a moving orbit
consists only of roots of $N$, all with $e_i\geq1$.
\end{proof}

\begin{theorem}[separability criterion]\label{thm:sep}
Let $\hat\alpha\in\C(t)[z^{\pm1}]$ be the symbol of a linear cellular automaton over $\Z$ \eqref{eq:rank1}, and assume $\hat\alpha\notin\C(t)$, that is, the automaton really moves mass. The following
are equivalent:
\begin{enumerate}[label=\textup{(\arabic*)},leftmargin=2.6em,itemsep=2pt]
\item $\hat\alpha(z,t)=\gamma(z)\,\psi(t)$ with $\gamma\in\C[z^{\pm1}]$ and
$\psi\in\C(t)^\times$ \textup{(}the symbol is \emph{separable}\textup{)};
\item $\Lc(\hat\alpha)=k[\delta]$;
\item $\dGal=\Gm(\C)$, of $\delta$-dimension $0$ and order $1$;
\item $\hat x$ is $\delta$-algebraic over $K$;
\item the transformed fundamental solution $\beta(z,t)$ based at $t_0$ satisfies a linear
differential equation of \emph{first order} in $z$, $P\,\delta\beta=Q\,\beta$, with
$P,Q\in\C[t][z^{\pm1}]$, $P\neq0$, valid for every $t\geq t_0$.
\end{enumerate}
Here $\hat x$ denotes a solution of \eqref{eq:rank1} in a parameterized Picard--Vessiot ring
$R$. When the conditions hold, $\ell(\hat x)=t\,\ell(\gamma)+v$ with $v\in k$, while
$\ell(\beta)=(t-t_0)\,\ell(\gamma)$. Otherwise $\dGal=\Gm$ and $\hat x$ is
$\delta$-transcendental over $K$.
\end{theorem}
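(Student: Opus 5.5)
Since $\hat\alpha\notin\C(t)$ and the symbol lies in $\C(t)[z^{\pm1}]$, case (1) of Theorem \ref{thm:tricho} cannot occur. Otherwise $s_O=0$ for every orbit, and by Lemma \ref{lem:roots}(iii) there would be no moving orbit at all. So every root $\rho_i$ would be $\delta$-constant, hence in $\C$ (the constants of $k$). Then $\hat\alpha=\gamma\cdot\psi(t)$ with $\psi\in\C(t)$ and $\gamma\in k$. Comparing with $\hat\alpha\in\C(t)[z^{\pm1}]$, we get $\gamma\in\C[z^{\pm1}]$. Since $\gamma$ is a root of unity, it is a constant, contradicting $\hat\alpha\notin\C(t)$. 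Thus only cases (2) and (3) remain, and the plan is to show that (1)--(5) all characterise case (2).

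For (1)$\Rightarrow$(2)$\Rightarrow$(3): if $\hat\alpha=\gamma(z)\psi(t)$, the roots in $t$ are the roots and poles of $\psi$, which lie in $\C$. Hence there are no moving orbits, and Lemma \ref{lm:dicho1} gives $\Lc=k[\delta]$. Case (2) of Theorem \ref{thm:tricho} then gives $\dGal=\Gm(\C)$, with the order computed in Theorem \ref{thm:main}. For (3)$\Rightarrow$(4), the relation \eqref{eq:relation} with $L_0=1$ exhibits $\ell(\hat x)-g\in k$, so $\hat x$ is $\delta$-algebraic. For (4)$\Rightarrow$(2), $\delta$-algebraicity excludes case (3) of the trichotomy, so we are in case (2).

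For (2)$\Rightarrow$(1), I would use Lemma \ref{lm:dicho1}: every moving orbit has $s_O=0$. Lemma \ref{lem:roots}(iii) says $s_O\ge1$ for moving orbits, so none exist. All roots are then constant, and $\hat\alpha=\gamma\psi(t)$ with $\psi\in\C(t)$ and $\gamma\in k$. Evaluating at a $t$ where $\psi\neq0$ shows $\gamma\in\C[z^{\pm1}]$. The explicit formulas follow directly. Corollary \ref{cor:auton}(1) gives $\ell(\hat x)=t\ell(\gamma)+v$. For $\beta=\gamma^{t-t_0}\prod\psi(s)$ we have $\ell(\beta)=(t-t_0)\ell(\gamma)$, which immediately yields (1)$\Rightarrow$(5) with $P=\gamma$ and $Q=(t-t_0)\delta\gamma$.

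The main obstacle is (5)$\Rightarrow$(1), since $\beta$ is a sequence and not an element of $K$. I would argue directly on $\beta$. From $P\delta\beta=Q\beta$ at times $t$ and $t+1$, with $\beta(t+1)=\hat\alpha(t)\beta(t)$, we get
\[
\ell(\hat\alpha(z,t))=\frac{Q(t+1)}{P(t+1)}-\frac{Q(t)}{P(t)}
\]
for all large $t$, as an identity in $\C(z)$, hence in $\C(z,t)$ (infinitely many $t$). So $\ell(\hat\alpha)$ is $\sigma$-summable in $\C(z)(t)\subset K$, i.e.\ $1\in\Lc(\hat\alpha)$, and we return to (2), hence to (1).

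The subtle point is that $Q/P$ must be a genuine rational function of $t$. This is guaranteed because $P,Q\in\C[t][z^{\pm1}]$ and $P(z,t)\neq0$ for large $t$. The closing sentence of the theorem ("otherwise $\dGal=\Gm$ and $\hat x$ is $\delta$-transcendental") is case (3) of Theorem \ref{thm:tricho}.
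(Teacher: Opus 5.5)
Your proof is correct. It differs from the paper's at the one nontrivial step, the return from (5).

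\textbf{The paper's route.} The paper proves (5)$\Rightarrow$(4) by treating $P\,\delta\beta=Q\,\beta$ as a spatial recurrence and invoking Theorem \ref{thm:holoA}. There, Riccati sequences turn the operator into a differential polynomial in $u=\ell(\beta)$. The identity is then carried to the parameterized Picard--Vessiot ring through the germs $\Seq_\infty(k)$ and the realization Lemma \ref{lem:embed}. That lemma rests on the fact that $\sigma\delta$-simple algebras over the $\delta$-closed $k$ have $\sigma$-constants $k$. From this one concludes that $\hat x$ is $\delta$-algebraic.

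\textbf{Your route.} You prove (5)$\Rightarrow$(2) directly. Comparing the first order equation at $t$ and $t+1$ gives $\ell(\hat\alpha)=(\sigma-1)(Q/P)$ at infinitely many integers, hence in $\C(z,t)$. This is an explicit summability certificate already lying in $\C(z)(t)\subset K$. The argument is more elementary, stays inside \S\ref{sec:sd} with no forward reference, and needs no Picard--Vessiot ring or realization argument. It works precisely because for a first order operator $u(t)=Q/P$ is itself rational in $t$.

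\textbf{What each buys.} The paper's detour buys uniformity. The same mechanism handles spatial recurrences of any order, and yields (6)$\Rightarrow$(2) in Theorem \ref{thm:holoequiv} at no extra cost. For higher order operators the Riccati relation is nonlinear in $u$ and does not determine $u(t)$ rationally, so your telescoping does not extend beyond order one.

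\textbf{Two small citation slips.}
\begin{enumerate}
\item Corollary \ref{cor:auton}(1) assumes $\alpha\in k^\times$, which $\gamma\psi$ is not. The relation $\ell(\hat x)=t\,\ell(\gamma)+v$ follows instead from Theorem \ref{thm:main}(2) with $g=t\,\ell(\gamma)$, since $\ell(\hat\alpha)=\ell(\gamma)$.
\item For (3)$\Rightarrow$(4), using $L_0=1$ presupposes (2). Either note first that (3) forces case (2) of the trichotomy, since case (1) is excluded and $\Gm(\C)\neq\Gm(k)$, or simply use $\ddim=0$ together with \eqref{eq:dimtrdeg}.
\end{enumerate}
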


\begin{proof}
(1)$\Rightarrow$(2). If $\hat\alpha=\gamma\psi$ all roots $\rho_i$ are roots of
$\psi\in\C(t)$, hence lie in $\C$ and there are no moving orbits; apply
Lemma \ref{lm:dicho1}.

(2)$\Rightarrow$(1). Suppose that there are no moving orbits, so that every $\rho_i$ is a
$\delta$-constant of $k$, that is $\rho_i\in\C$. Then $\psi:=\prod_i(t-\rho_i)^{e_i}\in\C(t)^\times$
and $\gamma=\hat\alpha/\psi$ is an element of $k$ lying in $\C(z,t)$, hence in $\C(z)$; since
$\gamma\psi=\hat\alpha\in\C(t)[z^{\pm1}]$, we get
$\gamma\in\C(t)[z^{\pm1}]\cap\C(z)=\C[z^{\pm1}]$, and $\hat\alpha$ is separable. Hence, if
$\hat\alpha$ is not separable, there is a moving orbit; by Lemma \ref{lem:roots}(iii) it has
$s_O\neq0$, and Lemma \ref{lm:dicho1} gives $\Lc=0$.

(2)$\Leftrightarrow$(3) and (3)$\Rightarrow$(4) are Theorem \ref{thm:main}, and
(4)$\Rightarrow$(2) follows from Theorem \ref{thm:main} together with
Lemma \ref{lm:dicho1}, which excludes $0\subsetneq\Lc\subsetneq k[\delta]$.

(1)$\Rightarrow$(5). With $\hat\alpha=\gamma\psi$ one has $b=\ell(\gamma)\in\C(z)$, which
is $\sigma$-constant; since $\ell$ takes products to sums,
$\ell(\beta)=\sum_{s=t_0}^{t-1}b=(t-t_0)\,\ell(\gamma)$, a quotient of
elements of $\C[t][z^{\pm1}]$. Writing it as $Q/P$ gives $P\,\delta\beta=Q\,\beta$ for every
$t\geq t_0$. For a solution $\hat x\in R$, since $b=\ell(\gamma)\in k$ is $\sigma$-constant,
$b=(\sigma-1)(t\,b)$, and Theorem \ref{thm:main}(2) with $L_0=1$ and $g=t\,\ell(\gamma)$
gives $\ell(\hat x)-t\,\ell(\gamma)\in k$, which is the relation stated after the list.

(5)$\Rightarrow$(4). The identity $P\,\delta\beta=Q\,\beta$, valid for all $t\geq t_0$, is
a spatial recurrence of the type considered in \S\ref{sec:holo}, and Theorem \ref{thm:holoA}
(whose proof uses only Lemmas \ref{lem:riccati}--\ref{lem:embed}, and not the present
theorem) shows that $\hat x$ is then $\delta$-algebraic over $K$.

Finally no power of $\hat\alpha$ is a $\sigma$-coboundary. Indeed, by Lemma \ref{lem:cob},
$\hat\alpha^m\in\B_\sigma(K)$ would require $s_O=0$ for \emph{every} orbit and $\gamma$ to
be a root of unity. If some root is moving, the first condition fails by
Lemma \ref{lem:roots}(iii). If none is, then $\hat\alpha=\gamma\psi$ with
$\gamma\in\C[z^{\pm1}]$ the leading coefficient of $N$ in $t$; a root of unity in
$\C[z^{\pm1}]$ must be a constant, since the units of $\C[z^{\pm1}]$ are the monomials and
$(cz^a)^m=1$ forces $a=0$; then $\hat\alpha\in\C(t)$, which was excluded.
\end{proof}

\section{Computation of the $\delta$-Galois group over $K=\C(z)^{\mathrm{diff}}(t)$}
\label{sec:algo}

\subsection{The algorithm}

Theorems \ref{thm:tricho} and \ref{thm:sep} turn the computation of the $\delta$-Galois
group of \eqref{eq:rank1} into a finite procedure. If the symbol $\hat\alpha \in \Q(t)[z^{\pm1}]$ has rational
coefficients, all steps are effective: factorization over $\overline{\Q(z)}$ is standard,
and the test $\rho-\rho'\in\Z$ is decided by the dispersion computations used in symbolic
summation \cite{Abramov}. In fact no algebraic closure is needed: it suffices to factor
$\mathrm{num}$ and $\mathrm{den}$ into monic irreducible factors over $\Q(z)$; all roots of
such a factor are conjugate over $\Q(z)$, and they are $\delta$-constant if and only if the
factor has coefficients in $\Q$, since the field of constants of $\overline{\Q(z)}$ is
$\overline\Q$.

\begin{algorithm}[h]
\caption{$\delta$-Galois group of a linear cellular automaton over $\Z$}
\label{alg:main}
\begin{algorithmic}[1]
\Require $\hat\alpha\in\Q(z)(t)^\times$, the symbol.
\Ensure the group $\dGal$, and the relation \eqref{eq:relation} when it is not $\Gm$.
\State write $\hat\alpha=\mathrm{num}/\mathrm{den}$ in lowest terms and factor both as
  polynomials in $t$ over $\overline{\Q(z)}$, obtaining
  $\hat\alpha=\gamma\prod_i(t-\rho_i)^{e_i}$ with $\gamma\in\Q(z)^\times$
\State group the $\rho_i$ into $\Z$-orbits: $\rho\sim\rho'$ iff $\rho-\rho'\in\Z$
\State set aside the orbits with $\partial_z\rho=0$ \Comment{$\delta$-constant, no contribution to $\Lc$}
\ForAll{moving orbits $O$}
  \State $s_O\gets\sum_{\rho_i\in O}e_i$
  \If{$s_O\neq0$} \Return $\dGal=\Gm$; the solution is $\delta$-transcendental \EndIf
\EndFor
\State \Comment{all moving orbital sums vanish}
\If{$s_O=0$ for \emph{every} orbit, moving or not, \textbf{and} $\gamma\in\{\pm1\}$}
  \State \Return the finite group $\mu_m$, $m=\ord(\gamma)$
  \Comment{over $\Q(z)$ the only roots of unity are $\pm1$}
\EndIf
\State compute $b\gets z\,\partial_z\hat\alpha/\hat\alpha$ and a certificate $g$ with
  $\sigma(g)-g=b$ by Abramov's algorithm
\State \Return $\dGal=\Gm(\C)$, with relation $\ell(\hat x)=g+v$, $v\in k$
\end{algorithmic}
\end{algorithm}

\noindent The test of lines 10--12 is the verification of the first case of the trichotomy in Theorem
\ref{thm:tricho}: without it the algorithm would return $\Gm(\C)$ for
$\hat\alpha=(t+1)/t=\sigma(t)/t$, whose Galois group is trivial, and for
$\hat\alpha=-(t+1)/t$, whose group is $\mu_2$. For \emph{genuine} cellular automata
($\hat\alpha\notin\C(t)$) that step is superfluous, its implicit verification in
the proof of Theorem \ref{thm:sep} short-circuits the whole
algorithm. It suffices to ask whether $\hat\alpha$ factors as a function of $z$ times a
function of $t$. Steps 1--8 are then a way of certifying the answer.

\subsection{From the group to a recurrence}

In the separable case the algorithm returns more than the group. Writing
$(t-t_0)\,\ell(\gamma)=Q/P$ with $P,Q\in\C[t][z^{\pm1}]$, $P=\sum_jP_j(t)z^j$,
$Q=\sum_jQ_j(t)z^j$, the identity $P\,\delta\beta=Q\,\beta$ satisfied by the transformed
fundamental solution $\beta(z,t)=\sum_nx_n(t)z^n$ and $\delta z^n=nz^n$ give,
upon comparing coefficients of $z^n$, the linear recurrence
\begin{equation}\label{eq:rec}
\sum_{j}\Big[(n-j)\,P_j(t)-Q_j(t)\Big]\,x_{n-j}(t)=0 ,
\end{equation}
valid for all $n\in\Z$ and all $t\geq t_0$. We return to this in \S\ref{sec:holo}.

\subsection{Wolfram's rule 90 and the binomial coefficients}

\begin{example}[Pascal]\label{ex:pascal}
Consider the autonomous automaton
\[
s_{n}(t+1)=s_{n-1}(t)+s_{n+1}(t),\qquad n\in\Z,
\]
which is Wolfram's rule 90 with complex states \cite[Ex.~6]{BM}. Its symbol is
$\hat\alpha=z+z^{-1}$, trivially separable with $\psi=1$, so by Theorem \ref{thm:sep}
\[
\dGal=\Gm(\C),\qquad \ddim=0,\qquad\ord=1 .
\]
The certificate is $b=\ell(\hat\alpha)=\dfrac{z^2-1}{z^2+1}$ and, with $t_0=0$, the
transformed fundamental solution $\beta=\hat\alpha^t$ satisfies
\[
\ell(\beta)=t\,\frac{z^2-1}{z^2+1},
\]
which under $z=e^{i\theta}$ reads $\partial_\theta\beta/\beta=-t\tan\theta$. Since
$\hat\alpha^t=(2\cos\theta)^t$, the relation says exactly that $(2\cos\theta)^t$ satisfies
a first order linear differential equation in the angle, with coefficient linear in $t$.
The finite support solutions are the binomial coefficients: the fundamental solution is
\[
f_{n}(t)=\binom{t}{\tfrac{t+n}{2}}\quad\text{if }t+n\text{ is even},\qquad 0
\text{ otherwise},
\]
and \eqref{eq:rec} becomes the recurrence $(n+t)\,x_n=(t+2-n)\,x_{n-2}$, which we have
verified against the automaton for $t\leq12$.
\end{example}

\subsection{Stirling numbers and H\"older's theorem}

\begin{example}[Stirling]\label{ex:stirling}
Consider now the non-autonomous automaton
\[
s_{n}(t+1)=s_{n-1}(t)+t\,s_{n}(t),\qquad n\in\Z,\qquad s_{n}(0)=\delta_{n,0},
\]
with $\kappa=\C(t)$. Its symbol is $\hat\alpha=z+t$, which is \emph{not} separable. Then:
\begin{enumerate}[label=\textup{(\roman*)},leftmargin=2.4em,itemsep=2pt]
\item the fundamental solution is $f_{n}(t)=\genfrac[]{0pt}{}{t}{n}$, the unsigned
Stirling number of the first kind, since the triangular recurrence
$\genfrac[]{0pt}{}{t+1}{n}=\genfrac[]{0pt}{}{t}{n-1}+t\genfrac[]{0pt}{}{t}{n}$ is
literally the rule of the automaton;
\item its Fourier transform is the Pochhammer symbol
\[
\beta(z,t)=\prod_{j=0}^{t-1}(z+j)=(z)_t=\frac{\Gamma(z+t)}{\Gamma(z)},
\]
by the classical generating identity for Stirling numbers;
\item the unique root in $t$ is $\rho=-z$, which is moving, with $e=1$ and $s_O=1\neq0$;
hence by Theorem \ref{thm:tricho}
\[
\dGal=\Gm,\qquad \ddim=1,\qquad \dtrdeg\big(Q(R)\mid K\big)=1 .
\]
\end{enumerate}
In particular a solution $\hat x\in R$ of $\sigma(\hat x)=(z+t)\hat x$ satisfies no
algebraic differential equation in $z$ with coefficients in $K$. By the realization argument
of Theorem \ref{thm:holoA} (Lemma \ref{lem:embed} applied in the ring of germs
$\Seq_\infty(k)$), the same holds for the sequence of polynomials
$\big((z)_t\big)_{t\in\N}=\big(\Gamma(z+t)/\Gamma(z)\big)_{t\in\N}$: no nonzero differential
polynomial with coefficients in $K$ vanishes on it for all large $t$. This is a cellular
automaton analogue of H\"older's theorem \cite{Holder}; the proof follows the method of Hardouin and Singer \cite{HS}, of which
Theorem \ref{thm:main} is a specialization. We say analogue and not corollary: H\"older's
theorem concerns $\Gamma$ as a function of one variable, whereas the statement above
concerns $\Gamma(z+t)/\Gamma(z)$ over the field $K=k(t)$, and neither is formally an
immediate consequence of the other. What is new here is not the technique but the source:
the symbol $z+t$ is the simplest non-autonomous rule on $\Z$, so the analogue of $\Gamma$
appears as a two term cellular automaton.
\end{example}

\section{$\delta$-Galois versus holonomy}\label{sec:holo}

The relation in the angle variable and the recurrence in the space variable are the same
object read on the two sides of the Fourier transform. We make this precise; the outcome is
that the parameterized group in the time direction decides holonomy in the space direction.

\subsection{Spatial holonomy}

Let $\beta(z,t)=\prod_{s=t_0}^{t-1}\hat\alpha(z,s)=\sum_nx_n(t)z^n\in\C[z^{\pm1}]$ be the
transformed fundamental solution based at $t_0$ as in \eqref{eq:t0}, so that $x_n(t)$ are
the coefficients of the fundamental solution. By the
choice of $t_0$ it is a nonzero Laurent polynomial for every $t\geq t_0$, hence invertible
in $\C(z)$.

\begin{definition}\label{def:holo}
The automaton is \emph{spatially holonomic} if there exist $d\geq0$ and
$p_0,\dots,p_d\in\C[n,t]$, not all zero, with
\begin{equation}\label{eq:U}
\sum_{j=0}^{d}p_j(n,t)\,x_{n+j}(t)=0\qquad\text{for all }n\in\Z\text{ and all }t\geq t_0.
\tag{U}
\end{equation}
Requiring \eqref{eq:U} for all $t\geq t_0$ or only for all large $t$, as an identity of germs
in the sense of \S\ref{sec:auto}, gives the same notion: if \eqref{eq:U} holds for $t\geq T$,
multiplying every $p_j$ by $\prod_{s=t_0}^{T-1}(t-s)$ makes it hold for $t\geq t_0$. The notion
does not depend on the choice of $t_0$ either: enlarging $t_0$ divides $\beta$ by a fixed
Laurent polynomial $D(z)$, and if $\Theta\beta=0$ then $\Theta\circ D$ annihilates $\beta/D$,
while if $\Theta(\beta/D)=0$ then clearing denominators in $\Theta\circ D^{-1}$ gives an
operator of $\C[t][z^{\pm1}][\delta]$ annihilating $\beta$; Lemma \ref{lem:dict} converts
operators back into recurrences.
\end{definition}

The uniformity in $t$ carries all the weight. For each fixed $t$ the family $(x_n(t))_n$ has finite
support, hence satisfies recurrences trivially: taking $d=0$ and
$p_0(n)=\prod_{m=N_-}^{N_+}(n-m)$, where $[N_-,N_+]$ contains the support, already works.
It also shows why $t_0$ matters: if the symbol were allowed to vanish at some $s\in\N$ and
we based the solution at $0$, then $\beta(z,t)$ would be identically zero for $t>s$ and
\eqref{eq:U} would hold for trivial reasons. Basing at $t_0$ excludes this.
What \eqref{eq:U} demands is \emph{one} identity whose coefficients are polynomial in $t$,
so that the same recurrence serves all times.

\subsection{The operator dictionary}

Let $F$ be a field of characteristic zero and let $\Dc_F$ be the algebra of operators on
$F[z^{\pm1}]$ generated by the multiplications $z^a$, the scalars of $F$ and
$\delta=z\,d/dz$. From $\delta z^a=z^a(\delta+a)$, every element of $\Dc_F$ is uniquely
$\Theta=\sum_{a,b}c_{a,b}z^{a}\delta^{b}$.

\begin{lemma}[dictionary]\label{lem:dict}
Let $f=\sum_nu_nz^n$. Then
\[
\Big(\sum_{a,b}c_{a,b}z^a\delta^b\Big)f=\sum_n\Big(\sum_aP_a(n-a)u_{n-a}\Big)z^n,
\qquad P_a(n):=\sum_bc_{a,b}n^{b},
\]
so $\Theta f=0$ if and only if $(u_n)$ satisfies the recurrence
$\sum_aP_a(n-a)u_{n-a}=0$; and $\Theta\leftrightarrow(P_a)_a$ is a bijection between
operators and linear recurrences with polynomial coefficients.
\end{lemma}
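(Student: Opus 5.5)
The identity is linear in $f$. Both sides make sense coefficientwise, since $\Theta$ is a finite sum. I will therefore compute the action of a single monomial operator $z^a\delta^b$ on $f=\sum_n u_nz^n$. Because $\delta z^n=nz^n$, induction on $b$ gives $\delta^b f=\sum_n n^b u_n z^n$. Multiplying by $z^a$ and reindexing $n\mapsto n-a$ gives
\[
z^a\delta^b f=\sum_n (n-a)^b\,u_{n-a}\,z^n .
\]
Multiplying by $c_{a,b}$ and summing first over $b$ produces $\sum_n P_a(n-a)u_{n-a}z^n$ for each fixed $a$. Summing over $a$ yields the displayed formula. For formal Laurent series $f$ (the case used for $\beta$ is in fact a Laurent polynomial), each coefficient of $z^n$ involves only finitely many terms, since only finitely many $a$ occur.

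The equivalence $\Theta f=0\iff\sum_aP_a(n-a)u_{n-a}=0$ for all $n$ then follows by comparing coefficients of $z^n$. The monomials $z^n$ are linearly independent, so a series vanishes exactly when every coefficient does.

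For the bijection, I will use the normal form $\Theta=\sum c_{a,b}z^a\delta^b$ already noted before the lemma. Its existence comes from the commutation rule $\delta z^a=z^a(\delta+a)$, which lets every $\delta$ be moved to the right. Uniqueness of the normal form is the one point that needs an argument. I would deduce it from the computation above, applied to $f=z^m$. We get $\Theta z^m=\sum_a P_a(m)z^{m+a}$. If $\Theta$ acts as zero, then $P_a(m)=0$ for every $m\in\Z$, so each polynomial $P_a$ vanishes, and hence all $c_{a,b}=0$.

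The assignment $\Theta\mapsto(P_a)_a$ is injective by this uniqueness. It is surjective because any finite family of polynomials $P_a(n)=\sum_b c_{a,b}n^b\in F[n]$ determines the operator $\sum c_{a,b}z^a\delta^b$. In this way recurrences of the form $\sum_a P_a(n-a)u_{n-a}=0$ correspond exactly to operators. After the substitution $n\mapsto n+j$, with $j=-a$, these recurrences have the same shape as \eqref{eq:U}. I expect no real obstacle here. The only care needed is in the bookkeeping of the shift $n-a$, and in noting that uniqueness of the normal form uses the fact that $F$ has characteristic zero, so a polynomial vanishing on all of $\Z$ is zero.
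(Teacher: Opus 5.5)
Your proposal is correct and follows the paper's proof, which consists of exactly your computation $\delta^b z^m=m^b z^m$, whence $z^a\delta^b f=\sum_n (n-a)^b u_{n-a} z^n$. Your extra argument for uniqueness of the normal form is sound: you apply $\Theta$ to $z^m$ and use that, in characteristic zero, a polynomial vanishing on all of $\Z$ is zero. The paper only asserts this uniqueness in the sentence before the lemma, so your argument fills that in.
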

\begin{proof}
$\delta^bz^m=m^bz^m$, so $z^a\delta^bf=\sum_n(n-a)^bu_{n-a}z^n$.
\end{proof}

\noindent This elementary computation is the classical equivalence between $D$-finiteness
and P-recursivity \cite{Stanley,Lipshitz}; stated with the Euler operator it makes
transparent that the two notions are the same one, written on the two sides of the Fourier
transform.

\subsection{From holonomy to the group}

\begin{lemma}[Riccati sequence]\label{lem:riccati}
Let $y$ be invertible and $u=\ell(y)$. Define $R_0=1$, $R_{i+1}=\delta R_i+uR_i$. Then
$\delta^iy=R_iy$, and
\[
R_i=u^{i}+(\text{terms containing at least one }\delta^{j}u,\ j\geq1).
\]
\end{lemma}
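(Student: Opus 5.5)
The plan is induction on $i$, using only the Leibniz rule and the definition $u=\ell(y)=\delta(y)/y$, i.e. $\delta y=uy$. For $i=0$ the claim $\delta^0y=R_0y$ holds with $R_0=1$. Assuming $\delta^iy=R_iy$, apply $\delta$ and use Leibniz:
\[
\delta^{i+1}y=\delta(R_iy)=\delta(R_i)\,y+R_i\,\delta y=\big(\delta R_i+uR_i\big)y=R_{i+1}y .
\]
This proves the first assertion. Invertibility of $y$ is needed only so that $u$ is defined.

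For the structural assertion, we regard each $R_i$ as a differential polynomial in $u$, that is, a polynomial with integer coefficients in $u,\delta u,\delta^2u,\dots$. The recursion never introduces coefficients from the base field, so by induction $R_i\in\Z[u,\delta u,\dots,\delta^{i-1}u]$. Call a monomial \emph{pure} if it is a power of $u$ alone, and let $J$ be the span of the non-pure monomials, those containing some $\delta^ju$ with $j\geq1$. The inductive hypothesis is $R_i=u^i+J_i$ with $J_i\in J$.

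We then check that both operations in the recursion preserve the required shape. First, multiplication by $u$ sends $J$ into $J$ and $u^i$ to $u^{i+1}$. Second, $\delta$ sends $J$ into $J$: a monomial containing a factor $\delta^ju$ with $j\geq1$ is differentiated by Leibniz, and each resulting term either still contains that factor or contains $\delta^{j+1}u$ in its place. Third, $\delta(u^i)=iu^{i-1}\delta u$ lies in $J$ for $i\geq1$, and equals $0$ for $i=0$. Combining these,
\[
R_{i+1}=\delta(u^i)+\delta J_i+u^{i+1}+uJ_i=u^{i+1}+\big(\text{element of }J\big),
\]
which completes the induction.

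No step is a genuine obstacle. The only point needing care is to read the statement formally, as an identity of differential polynomials in $u$ with integer coefficients. Read that way, the pure part $u^i$ is well defined and the decomposition is unambiguous, even though for a specific $u$ some $\delta^ju$ might happen to vanish. This formal reading is presumably what the later holonomy argument uses: setting $\delta^ju=0$ for $j\geq1$ reduces $R_i$ to $u^i$.
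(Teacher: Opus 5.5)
Your proof is correct and is essentially the paper's argument. Both argue by induction on $i$ with the Leibniz rule: $uR_i$ supplies the pure power $u^{i+1}$, and every term of $\delta R_i$ contains a derivative of $u$. Your explicit reading of $R_i$ as a differential polynomial with integer coefficients is a useful clarification that the paper leaves implicit, and it is exactly the form in which the lemma is used in Theorem \ref{thm:holoA}.
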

\begin{proof}
The first assertion is induction. For the second, $R_0=1$ and $R_1=u$; if $R_i=u^i+S_i$
with $S_i$ made of terms containing derivatives, then every term of $\delta R_i$ contains a
derivative by the Leibniz rule, and $uR_i=u^{i+1}+uS_i$.
\end{proof}

\begin{lemma}\label{lem:uy}
Let $K\subseteq L$ be $\delta$-fields and $y\in L^\times$ with $u=\ell(y)$. Then $y$ is
$\delta$-algebraic over $K$ if and only if $u$ is.
\end{lemma}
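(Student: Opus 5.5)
The plan is to prove the two implications separately, using the Riccati sequence of Lemma \ref{lem:riccati} to pass between differential polynomials in $y$ and in $u=\ell(y)$. Throughout we work inside the $\delta$-field $L$, where $y$ is invertible, so $\ell(y)$ makes sense and $K\langle y\rangle\subseteq L$.

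For the direction ($u$ $\delta$-algebraic $\Rightarrow$ $y$ $\delta$-algebraic), observe that $u=\delta y/y$ lies in $K\langle y\rangle$. Hence $K\langle u\rangle\subseteq K\langle y\rangle$. We also have $\delta y=uy$, and inductively $\delta^iy=R_iy$ with $R_i\in K\{u\}$ by Lemma \ref{lem:riccati}. It follows that $K\langle y\rangle=K\langle u\rangle(y)$, a field generated over $K\langle u\rangle$ by the single element $y$. Therefore
\[
\trdeg K\langle y\rangle/K\ \le\ \trdeg K\langle u\rangle/K+1 .
\]
If $u$ is $\delta$-algebraic, the right-hand side is finite, so $y$ is $\delta$-algebraic. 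One could instead argue directly: a differential polynomial $F(u,\delta u,\dots,\delta^r u)=0$ becomes, after substituting $\delta^j u=\delta^j(\delta y/y)$ and clearing a power of $y$, a differential polynomial in $y$. The only thing to check is that this polynomial is nonzero. The transcendence-degree argument avoids that check, so I will use it.

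For the converse ($y$ $\delta$-algebraic $\Rightarrow$ $u$ $\delta$-algebraic), the same inclusion $K\langle u\rangle\subseteq K\langle y\rangle$ gives $\trdeg K\langle u\rangle/K\le\trdeg K\langle y\rangle/K<\infty$. So $u$ is $\delta$-algebraic over $K$. Equivalently, $\delta$-algebraicity is inherited by every element of the $\delta$-field generated by a $\delta$-algebraic element.

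The main obstacle is only bookkeeping: we must make sure that "$\delta$-algebraic" is used in the sense of finite transcendence degree of the generated $\delta$-field. That sense is equivalent to the vanishing of a nonzero differential polynomial, since the elements $\delta^iy$ are algebraically dependent iff some finite subfamily is. If one instead wants an explicit equation for $u$ from one for $y$, the route is this. Divide $F(y,\dots,\delta^ry)=0$ by a suitable power of $y$ after writing $\delta^iy=R_iy$; each monomial $\prod(\delta^iy)^{m_i}$ then becomes $y^{|m|}\prod R_i^{m_i}$. Grouping by total degree $|m|$ leaves the problem that $y$ itself may remain. Removing it would be done by taking homogeneous components, which needs an extra argument. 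For this reason I would rely on the transcendence-degree formulation rather than explicit elimination; Lemma \ref{lem:riccati} serves only to justify $K\langle y\rangle=K\langle u\rangle(y)$.
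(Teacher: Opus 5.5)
Your argument is correct and is essentially the paper's: $u\in K\langle y\rangle$ gives one direction, and the first-order relation $\delta y=uy$ over $K\langle u\rangle$ gives the other; where the paper invokes additivity of $\dtrdeg$ in the tower $K\subseteq K\langle u\rangle\subseteq K\langle y\rangle$, you use the identity $K\langle y\rangle=K\langle u\rangle(y)$ and ordinary transcendence degree. The one point to repair is your justification that $\delta$-algebraicity means $\trdeg K\langle y\rangle/K<\infty$. It does not follow from finite subfamilies but from the standard separant argument: in characteristic zero, differentiating the minimal polynomial of $\delta^r y$ over $K(y,\dots,\delta^{r-1}y)$ puts every higher derivative in $K(y,\dots,\delta^r y)$.
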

\begin{proof}
One direction is clear. Conversely $\delta y=uy$ is an algebraic differential equation of
first order for $y$ over $K\langle u\rangle$, so
$\dtrdeg(K\langle y\rangle\mid K\langle u\rangle)=0$; conclude by additivity of the
differential transcendence degree in towers.
\end{proof}

\begin{lemma}[realization]\label{lem:embed}
Let $S$ be a $\sigma\delta$-$K$-algebra containing an invertible solution $Y$ of
$\sigma(X)=\hat\alpha X$, and let $S'\subseteq S$ be the $\sigma\delta$-$K$-subalgebra
generated by $Y$ and $Y^{-1}$. Then there is a maximal $\sigma\delta$-ideal
$\mathfrak m\subset S'$, an isomorphism of $\sigma\delta$-$K$-algebras
$S'/\mathfrak m\cong R$ with $R$ a parameterized Picard--Vessiot ring for the equation,
and under it the class of $Y$ is $c\,\hat x$ for some $c\in k^\times$.
\end{lemma}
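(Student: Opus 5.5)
The plan is to realize $R$ as a quotient of $S'$ by a maximal $\sigma\delta$-ideal, using only the uniqueness part of parameterized Picard--Vessiot theory from \S\ref{ss:parameterized}.

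\emph{Step 1: choice of $\mathfrak m$.} $S'$ is a nonzero $\sigma\delta$-ring, since it contains $1$ and the unit $Y$. By Zorn's lemma it therefore has a maximal $\sigma\delta$-ideal $\mathfrak m$; the union of a chain of proper $\sigma\delta$-ideals is proper because none contains $1$. The quotient $\bar S=S'/\mathfrak m$ is then $\sigma\delta$-simple and is generated over $K$, as a $\sigma\delta$-algebra, by the class $\bar Y$ and its inverse. Moreover $\sigma(\bar Y)=\hat\alpha\bar Y$ with $\bar Y$ invertible, so $\bar Y$ is a fundamental matrix of size $1$. Thus $\bar S$ satisfies the definition of a parameterized Picard--Vessiot ring. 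One point must be checked: the definition requires generation by $\bar Y$, $\bar Y^{-1}$ as a $\delta$-ring, i.e.\ by $\delta^j\bar Y$ and $\bar Y^{-1}$. This holds because $\sigma$-stability is automatic: $\sigma(\delta^j\bar Y)=\delta^j(\hat\alpha\bar Y)$ is a polynomial in the $\delta^i\bar Y$ over $K$, and $\sigma(\bar Y^{-1})=\hat\alpha^{-1}\bar Y^{-1}$.

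\emph{Step 2: uniqueness.} The hypotheses of \S\ref{ss:parameterized} hold, since $K^\sigma=k$ is $\delta$-closed. Uniqueness of parameterized Picard--Vessiot rings \cite{HS,Wibmer} then gives a $\sigma\delta$-$K$-isomorphism $\phi\colon\bar S\to R$.

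\emph{Step 3: the class of $Y$.} The image $\phi(\bar Y)$ is an invertible solution of $\sigma(X)=\hat\alpha X$ in $R$. Put $c=\phi(\bar Y)/\hat x\in R$. Then $\sigma(c)=c$, so $c\in R^\sigma$, and $c$ is a unit. The main obstacle is to show $R^\sigma=k$. The standard fact is that the $\sigma$-constants of a parameterized Picard--Vessiot ring (indeed of its total ring of fractions) equal $K^\sigma$; this is part of the theory cited in \cite{HS,HSS} and is used already in Step 1 of Theorem \ref{thm:main}, where $Q(R)^\sigma=k$. Granting it, $c\in k^\times$, and the isomorphism can be taken as stated.

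If one prefers not to rely on this fact for $R$ itself, the alternative route is to note that $\bar S$ is $\sigma\delta$-simple with $\bar S^\sigma\supseteq k$. Then $\bar S^\sigma$ is a $\delta$-field that is algebraic over, or at least embeds into, a $\delta$-closed extension of $k$ inside $Q(\bar S)$. One argues as in \cite{HS} that it equals $k$, because $k$ is $\delta$-closed. I expect to simply cite the constants statement from \cite[\S4]{HSS}.
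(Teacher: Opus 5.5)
Your proposal is correct and follows the paper's route: Zorn's lemma gives a maximal $\sigma\delta$-ideal, the quotient is $\sigma\delta$-simple and generated by $\bar Y^{\pm1}$, uniqueness of parameterized Picard--Vessiot rings gives the isomorphism, and the fact that the $\sigma$-constants are exactly $k$ makes two invertible solutions differ by a factor in $k^\times$. The only difference is where the constants statement is applied: the paper applies it to $S'/\mathfrak m$, as part of recognizing it as a parameterized Picard--Vessiot ring, while you apply it to $R$ after transporting $\bar Y$ through the isomorphism. Your closing ``alternative route'' paragraph is therefore unnecessary.
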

\begin{proof}
$S'\neq0$, so Zorn's lemma provides a maximal $\sigma\delta$-ideal $\mathfrak m$; the
quotient $S'/\mathfrak m$ is $\sigma\delta$-simple and is generated over $K$ by the class
of $Y$ together with its inverse, that is, by a fundamental solution and the inverse of its
determinant. Since $k=K^\sigma$ is $\delta$-closed, a $\sigma\delta$-simple, finitely
$\delta$-generated $\sigma\delta$-$K$-algebra has ring of $\sigma$-constants equal to $k$
\cite{HS,Wibmer}, so $S'/\mathfrak m$ is a parameterized Picard--Vessiot ring; by
uniqueness it is isomorphic to $R$. Two invertible solutions in a $\sigma$-ring with
constants $k$ differ by a factor in $k^\times$, which gives the last assertion.
\end{proof}

\noindent We use the lemma in the opposite direction to an embedding: it lets us transport
an identity valid in $S$ down to $R$, which is all that is needed below.

\begin{theorem}\label{thm:holoA}
Assume that $\hat\alpha^m\notin\B_\sigma(K)$ for all $m\geq1$, which is automatic for
genuine automata by the last paragraph of the proof of Theorem \ref{thm:sep}. If the automaton
is spatially holonomic then
$\Lc(\hat\alpha)\neq0$, equivalently the parameterized Galois group has
$\delta$-dimension $0$.
\end{theorem}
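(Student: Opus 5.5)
The plan is to show that spatial holonomy forces a solution $\hat x\in R$ to be $\delta$-algebraic over $K$, and then invoke Theorem \ref{thm:main}: under the standing hypothesis that no power of $\hat\alpha$ is a coboundary, case (1) of that theorem makes $\hat x$ $\delta$-transcendental when $\Lc(\hat\alpha)=0$. So $\delta$-algebraicity gives $\Lc(\hat\alpha)\neq0$, and hence $\ddim G=0$.

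\emph{Step 1: translate \eqref{eq:U} into an operator.} By Lemma \ref{lem:dict}, \eqref{eq:U} with coefficients $p_j\in\C[n,t]$ corresponds to an operator
\[
\Theta=\sum_{a,b}c_{a,b}(t)\,z^a\delta^b\in\C[t][z^{\pm1}][\delta],\qquad \Theta\neq0,
\]
such that $\Theta\beta(\cdot,t)=0$ for every $t\geq t_0$. The bijection of the dictionary ensures $\Theta\neq0$. Write $\Theta=\sum_{b\le D}A_b(z,t)\delta^b$ with $A_D\neq0$.

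\emph{Step 2: realize $\beta$ as a solution in a $\sigma\delta$-algebra.} Take $S=\Seq_\infty(k)$, the germs of sequences with values in $k$. Here $\sigma$ is the shift, $\delta$ acts termwise, and $K=k(t)$ embeds because a rational function in $t$ has finitely many poles. The germ $Y=(\beta(z,t))_t$ satisfies $\sigma Y=\hat\alpha Y$, and it is invertible because each $\beta(z,t)\neq0$ for $t\ge t_0$. Inside $S$ the relation $\Theta Y=0$ holds as an identity of germs. It is a $\delta$-polynomial relation with coefficients in $K$, namely
\[
\sum_b A_b\,\delta^bY=0 .
\]
By Lemma \ref{lem:embed} we pass to the quotient $S'/\mathfrak m\cong R$, where $Y\mapsto c\hat x$ with $c\in k^\times$. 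This gives $\Theta(c\hat x)=0$ in $R$, so $c\hat x$, and hence $\hat x$, is $\delta$-algebraic over $K$, provided the relation does not collapse in the quotient.

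\emph{The main obstacle} is this collapse: the image of a nonzero operator relation could become trivially satisfied in $R$. To rule it out, divide by $Y$ and use Lemma \ref{lem:riccati}. Setting $u=\ell(\hat x)$, the relation becomes
\[
\sum_b A_b R_b(u)=0,
\]
a differential polynomial in $u$ over $K$. Its top-degree part in $u$ (undifferentiated) is $A_D u^D+\dots$, which is nonzero as a differential polynomial since $A_D\in K^\times$. Hence $u$ satisfies a nontrivial algebraic differential equation over $K$, so $u$ is $\delta$-algebraic. Lemma \ref{lem:uy} then gives that $\hat x$ is $\delta$-algebraic over $K$. To keep the argument inside a field, I would pass to the total ring of fractions $Q(R)$, which is a product of fields, and work in one factor. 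Finally, Theorem \ref{thm:main}(1), together with \eqref{eq:dimtrdeg}, excludes $\Lc(\hat\alpha)=0$. Theorem \ref{thm:main}(2) then gives $\ddim_k G=0$.
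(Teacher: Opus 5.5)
Your proposal is correct and is essentially the paper's proof. Both go through the dictionary to a nonzero $\Theta\in\C[t][z^{\pm1}][\delta]$, realize $\beta$ as an invertible solution in $\Seq_\infty(k)$ transported to $R$ by Lemma \ref{lem:embed}, use the Riccati substitution with nonzero top term $A_Du^D$, and conclude with Lemma \ref{lem:uy} and Theorem \ref{thm:main}(1). The paper applies the Riccati step in $S$ before passing to $R$, and absorbs the factor $c$ by replacing $u$ with $u+\ell(c)$; in your write-up $u$ should be $\ell(c\hat x)$ rather than $\ell(\hat x)$. Your ``collapse'' worry is also moot, since $\Theta(c\hat x)=0$ is already a nonzero linear differential relation over the field $K\subseteq R$, so the Riccati detour could be skipped entirely.
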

\begin{proof}
By Lemma \ref{lem:dict}, \eqref{eq:U} corresponds to a nonzero operator
$\Theta=\sum_{b\leq r}q_b(z,t)\delta^{b}$ with $q_b\in\C[t][z^{\pm1}]$, $q_r\neq0$, such
that $\Theta(\beta(z,t))=0$ for every $t\geq t_0$. Let $u(t)=\ell(\beta(z,t))\in\C(z)$.
Dividing by $\beta(z,t)$ and using Lemma \ref{lem:riccati},
\begin{equation}\label{eq:ADE}
\Phi\big(u,\delta u,\dots,\delta^{r-1}u\big):=\sum_{b=0}^{r}q_b(z,t)\,R_b=0 .
\end{equation}
By Lemma \ref{lem:riccati} each $R_b$ has total degree $b$ in $u,\delta u,\dots$, and the
pure power $u^{r}$ occurs only in $R_r$; so the coefficient of $u^{r}$ in $\Phi$ is
$q_r\neq0$ and $\Phi$ is a nonzero
differential polynomial with coefficients in $\C[t][z^{\pm1}]\subseteq K$.

It remains to see that \eqref{eq:ADE} holds in the parameterized Picard--Vessiot extension.
Take $S:=\Seq_\infty(k)$, the ring of germs of sequences in $k$, with $\sigma$ the shift and
$\delta$ acting termwise; then $\sigma$ is an automorphism, $S^\sigma=k$, and
$K=k(t)$ embeds in $S$ by sending $t$ to the germ of $(0,1,2,\dots)$ --- here again germs
dispose of the finitely many integers at which a rational function is undefined. The germ
$Y$ of $t\mapsto\beta(z,t)$ is an invertible solution of $\sigma(X)=\hat\alpha X$ in $S$,
invertibility being exactly what the choice of $t_0$ in \eqref{eq:t0} guarantees. Identity
\eqref{eq:ADE} holds for every $t\geq t_0$, hence in $S$, hence in the
$\sigma\delta$-$K$-subalgebra $S'$ generated by $Y^{\pm1}$, and therefore in every quotient
of $S'$. By Lemma \ref{lem:embed} one such quotient is a parameterized Picard--Vessiot ring
$R$, in which the class of $Y$ equals $c\,\hat x$ with $c\in k^\times$; so $\Phi$
annihilates $\ell(\hat x)+\ell(c)$ there. As $\ell(c)\in k\subseteq K$, replacing $u$ by
$u+\ell(c)$ turns $\Phi$ into another nonzero differential polynomial over $K$ with the
same term $q_ru^{r}$ of top degree. Hence $\ell(\hat x)$ is $\delta$-algebraic over $K$. By Lemma \ref{lem:uy} so is $\hat x$, whence
$\dtrdeg(Q(R)\mid K)=0$ and $\Lc(\hat\alpha)\neq0$ by Theorem \ref{thm:main}.
\end{proof}

\subsection{The equivalence}

\begin{theorem}\label{thm:holoequiv}
Let $\hat\alpha\in\C(t)[z^{\pm1}]$ with $\hat\alpha\notin\C(t)$. Then the conditions
\textup{(1)--(5)} of Theorem \ref{thm:sep} are further equivalent to
\begin{enumerate}[label=\textup{(\arabic*)},leftmargin=2.6em,start=6]
\item the automaton is spatially holonomic.
\end{enumerate}
When they hold, the recurrence is \eqref{eq:rec} with $Q/P=(t-t_0)\,\ell(\gamma)$, of order equal
to the length of the joint support of $P$ and $Q$ and of degree one in $n$. Otherwise
$\dGal=\Gm$ and \emph{no} uniform recurrence exists.
\end{theorem}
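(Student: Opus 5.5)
The plan is to close the cycle of implications by proving (1)$\Rightarrow$(6) and (6)$\Rightarrow$(2). Everything else is already in Theorem \ref{thm:sep}, together with Theorem \ref{thm:holoA}, which was stated precisely for this purpose.

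For (1)$\Rightarrow$(6), I would reuse the computation in the proof of (1)$\Rightarrow$(5) of Theorem \ref{thm:sep}. If $\hat\alpha=\gamma(z)\psi(t)$, then $\ell(\beta)=(t-t_0)\ell(\gamma)$. Since $\gamma\in\C[z^{\pm1}]$, we have $\ell(\gamma)=z\gamma'/\gamma$, so we may take
\[
P=\gamma,\qquad Q=(t-t_0)\,z\gamma' ,
\]
both in $\C[t][z^{\pm1}]$ with $P\neq0$. Then $P\,\delta\beta=Q\,\beta$ holds for every $t\geq t_0$. Comparing coefficients of $z^n$ through Lemma \ref{lem:dict} gives exactly the recurrence \eqref{eq:rec}. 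Its coefficients $(n-j)P_j(t)-Q_j(t)$ lie in $\C[n,t]$ and have degree one in $n$. After reindexing $n\mapsto n+\max j$, the recurrence has the form \eqref{eq:U}, with order equal to the length of the joint support of $P$ and $Q$.

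Nontriviality needs a check: some coefficient must be a nonzero polynomial. This holds because $P\neq0$, so for any $j$ with $P_j\neq0$ the coefficient $(n-j)P_j(t)-Q_j(t)$ has a nonzero $n$-term. The fact that \eqref{eq:rec} holds for all $n$ and all $t\geq t_0$ then gives spatial holonomy.

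For (6)$\Rightarrow$(2), I would apply Theorem \ref{thm:holoA}. Its hypothesis that no power of $\hat\alpha$ is a $\sigma$-coboundary holds for $\hat\alpha\notin\C(t)$, by the last paragraph of the proof of Theorem \ref{thm:sep}. The theorem then gives $\Lc(\hat\alpha)\neq0$. Lemma \ref{lm:dicho1} excludes any proper nonzero ideal, so $\Lc(\hat\alpha)=k[\delta]$, which is (2).

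The final sentence of the statement is the contrapositive. If $\hat\alpha$ is not separable, Theorem \ref{thm:sep} gives $\dGal=\Gm$, and (6) fails, so no recurrence \eqref{eq:U} exists.

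The main obstacle is not in this assembly but in the soundness of Theorem \ref{thm:holoA}, which is taken as given. The one point that needs care here is that the statement claims the recurrence is \eqref{eq:rec} itself. This requires checking that the choice of $Q/P$ is legitimate when $t_0\neq0$, and that the resulting recurrence is not identically zero. Both follow from the explicit $P=\gamma$ above.
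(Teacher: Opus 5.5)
Your proposal is correct and follows the paper's proof. The paper closes the cycle with (5)$\Rightarrow$(6), applying Lemma \ref{lem:dict} to $\Theta=P\delta-Q$, and with (6)$\Rightarrow$(2), via Theorem \ref{thm:holoA} and Lemma \ref{lm:dicho1}. Your (1)$\Rightarrow$(6) is the same computation with the explicit choice $P=\gamma$, $Q=(t-t_0)\,z\gamma'$, which also makes the nontriviality of the recurrence and the degree-one claim transparent.
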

\begin{proof}
(5)$\Rightarrow$(6) is the computation \eqref{eq:rec}, which is Lemma \ref{lem:dict}
applied to $\Theta=P\delta-Q$. (6)$\Rightarrow$(2) is Theorem \ref{thm:holoA}, whose
hypothesis holds for genuine automata, together with Lemma \ref{lm:dicho1}, which excludes
$0\subsetneq\Lc\subsetneq k[\delta]$.
\end{proof}

\begin{corollary}\label{cor:stirling}
There is no nonzero family $p_0,\dots,p_d\in\C[n,t]$ such that
\[
\sum_{j=0}^{d}p_j(n,t)\,\genfrac[]{0pt}{}{t}{n+j}=0
\qquad\text{for all }n\in\Z\text{ and all }t\in\N .
\]
Equivalently, no nonzero operator of $\C[t][z^{\pm1}][\delta]$ annihilates $(z)_t$ for
every $t\in\N$. In particular the bivariate array
$\big(\genfrac[]{0pt}{}{t}{n}\big)_{n,t}$ is not P-finite in the sense of \cite{Lipshitz}.
\end{corollary}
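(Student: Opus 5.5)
The plan is to specialize Theorem \ref{thm:holoequiv} to the Stirling automaton of Example \ref{ex:stirling}. Its rule is $s_n(t+1)=s_{n-1}(t)+t\,s_n(t)$ with $\kappa=\C(t)$, and its symbol is $\hat\alpha=z+t$. This symbol lies in $\C(t)[z^{\pm1}]$ but not in $\C(t)$, so the automaton is genuine.

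The first step is to check the base time. Here $\hat\alpha(z,s)=z+s$ is a nonzero Laurent polynomial for every $s\in\N$, so $t_0=0$ is admissible in \eqref{eq:t0}. By Example \ref{ex:stirling}(i), the fundamental solution based at $0$ has coefficients $x_n(t)=\genfrac[]{0pt}{}{t}{n}$. Its transform is $\beta(z,t)=(z)_t$ by (ii).

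The second step is to show the symbol is not separable. Suppose $z+t=\gamma(z)\psi(t)$. Comparing degrees in $t$ forces $\psi$ to have degree $1$, so $\gamma$ is a constant. Comparing degrees in $z$ then forces $\gamma$ to have degree $1$, so $\psi$ is a constant. These conclusions contradict each other. Equivalently, we can cite (iii): the root $\rho=-z$ in $t$ is moving and has $s_O=1$.

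By Theorem \ref{thm:sep}, condition (1) fails, and by Theorem \ref{thm:holoequiv}, condition (6) fails. So the automaton is not spatially holonomic. This means no nonzero family $p_j\in\C[n,t]$ satisfies \eqref{eq:U} for all $n\in\Z$ and all $t\geq0$, which is exactly the displayed claim.

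For the equivalent operator formulation, I would apply Lemma \ref{lem:dict}. A nonzero $\Theta\in\C[t][z^{\pm1}][\delta]$ with $\Theta(z)_t=0$ for every $t\in\N$ translates coefficientwise into a nonzero recurrence $\sum_a P_a(n-a,t)\,x_{n-a}(t)=0$ with polynomial coefficients. Shifting the index $a$ so that it ranges over $0,\dots,d$ puts this recurrence in the form \eqref{eq:U}. Conversely, \eqref{eq:U} yields such a $\Theta$. Nonvanishing is preserved in both directions because the correspondence is a bijection.

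For the P-finiteness claim, I would argue as follows. P-finiteness in the sense of \cite{Lipshitz} requires, in particular, a nonzero recurrence in the $n$-direction with coefficients in $\C[n,t]$, valid for all $(n,t)$. The only subtlety is the range: such a recurrence may be asserted only on $n,t\geq0$, or only for large arguments. Large $t$ is handled by the remark in Definition \ref{def:holo}, since multiplying by a polynomial in $t$ extends validity to all $t\ge t_0$. For $n<0$ all the Stirling numbers vanish, so any relation valid for $n\geq0$ extends trivially except near the boundary $n\in[-d,-1]$. To fix this, I would multiply the coefficients by $\prod_{m=1}^{d}(n+m)$, which kills the boundary terms.

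I expect this boundary bookkeeping, making sure the notion of P-finiteness matches \eqref{eq:U} exactly, to be the only real obstacle. Everything else is a direct citation of Theorems \ref{thm:sep} and \ref{thm:holoequiv}.
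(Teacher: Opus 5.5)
Your proposal is correct and follows the paper's proof. The steps match: $t_0=0$ and the non-separability of $z+t$ feed into Theorem \ref{thm:holoequiv}, Lemma \ref{lem:dict} gives the operator form, and a polynomial multiplier in $n$ absorbs the boundary terms for P-finiteness. One small piece of bookkeeping is missing: the paper's multiplier $\prod_{i=-d}^{n_0-1}(n-i)$ covers in one stroke a recurrence valid only for $n\ge n_0$ with $n_0>0$, whereas your $\prod_{m=1}^{d}(n+m)$ is just the case $n_0=0$, so to cover the large-$n$ range you mention you should add the factors $(n-m)$ for $0\le m<n_0$.
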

\begin{proof}
The symbol $z+t$ is not separable and $t_0=0$ works for it, so Theorem
\ref{thm:holoequiv} applies. The equivalence with the operator statement is
Lemma \ref{lem:dict}. For the last assertion, P-finiteness requires a recurrence in $n$
alone whose coefficients are polynomial in $n$ and $t$, but only for $n$ large; this
suffices, because the support is bounded below uniformly in $t$ --- one has
$\genfrac[]{0pt}{}{t}{n}=0$ for $n\leq0<t$ --- so that a recurrence valid for $n\geq n_0$
becomes valid for all $n\in\Z$ after multiplying the coefficients by
$\prod_{i=-d}^{n_0-1}(n-i)$ (the factor $n+d$ takes care of $t=0$, where
$\genfrac[]{0pt}{}{0}{0}=1$).
\end{proof}

\begin{remark}
For each fixed $t\in\N$ the function $(z)_t$ is a
polynomial, hence $D$-finite in $z$; and for a fixed complex parameter $t\notin\Z$ the
function $\Gamma(z+t)/\Gamma(z)$ has infinitely many poles and is trivially not $D$-finite.
The content of the corollary is the uniformity in $t$, which is precisely what the
parameterized Galois group measures.
\end{remark}

\begin{remark}\label{rmk:status}
The non-holonomicity of the Stirling array is treated as known in the symbolic summation
literature: Kauers opens \cite{Kauers07} by observing that no summation algorithm covers
Stirling number identities, because their triangular recurrences exclude them from all
tractable classes, and Chyzak, Kauers and Salvy build the framework of \cite{CKS09}
precisely because Stirling numbers are not covered by the holonomic one, asserting that the
array is not $\partial$-finite. We have not located a published proof. The papers that do
prove non-holonomicity --- \cite{Gerhold04} for $\sqrt n$, $n^n$, $W(n)$; \cite{FGS,FGS10}
for $\log n$, $p_n$, $1/H_n$, $e^{cn^{\theta}}$; \cite{BGKL08} for values of $\zeta$
at integer arguments --- do not include the Stirling numbers, and their methods are
mostly analytic.
The closest methodological precedent is \cite{BDR}, where the differential transcendence of
the Bell numbers, first proved by Klazar \cite{Klazar}, is reproved by difference Galois
theory. For the Stirling array itself a short direct argument is available
(Remark \ref{rmk:egf}); what Theorem \ref{thm:holoequiv} adds is a criterion valid for the
whole family of automata over $\Z$, rather than an argument case by case.
\end{remark}

\begin{remark}[a direct argument for the Stirling array]\label{rmk:egf}
Corollary \ref{cor:stirling} can also be obtained from the exponential generating function
\[
F(z,u)=\sum_{t\geq0}(z)_t\,\frac{u^t}{t!}=(1-u)^{-z}=e^{-zL},\qquad L=\log(1-u),
\]
regarded as a formal power series in $u$ with coefficients in $\C[z]$. Let
$\Theta=\sum_{b\leq r}q_b(z,t)\delta^b$, $q_r\neq0$, annihilate $(z)_t$ for every $t\in\N$.
Since multiplication by $t$ on coefficients corresponds to $\theta_u=u\,\partial_u$ on
exponential generating functions, and $t$ commutes with $z$ and $\delta=\delta_z$, we get
$\Theta(z,\delta_z,\theta_u)F=0$. Now $\delta_zF=-zL\,F$ and $\theta_uF=\frac{zu}{1-u}F$,
while $\theta_u L$ is rational; hence $\theta_u$ does not raise the degree in $L$, and
$\Theta F=F\cdot P(L)$ with $P\in\C(z,u)[L]$. The terms with $b<r$ contribute in degree at
most $b$, and the coefficient of $L^r$ is $(-z)^r\,q_r(z,\theta_u)F/F$, which is nonzero
because $q_r(z,\theta_u)F=\sum_tq_r(z,t)(z)_t\,u^t/t!\neq0$. Thus $L$ would be algebraic
over $\C(z,u)$, which is absurd. This argument depends on the closed form of $F$ and does
not extend to general symbols, whereas Theorem \ref{thm:holoequiv} does.
\end{remark}


\section{Conclusions}\label{sec:concl}

For linear cellular automata over $\Z$ we have computed the parameterized Galois group of
the Fourier transformed equation, found that for genuine automata with symbols rational in
time only two groups occur, and
identified the dividing line as separability of the symbol, a condition that can be checked
by inspection and certified by an algorithm. The two simplest automata on $\Z$ realize the
two cases, and the non-autonomous one is a cellular automaton form of H\"older's theorem.

Placing this beside the results of \cite{BM} we obtain a coherent picture of the three
groups attached to an automaton. They are the three natural ways of looking at the group
algebra $\A=\C[z^{\pm1}]$: its torsion quotients, its fraction field, and $\A$ itself. The
periodic group is the character group of the multiplicative group generated by the values
of the symbol at roots of unity, an arithmetic object; the parameterized group lives at the
generic point, where the classical theory is blind and all the content is differential; and
the finite support group sits in between and remains open.
Finally, the parameterized group in the time direction decides holonomy in the space
direction, which gives a Galois theoretic route to non-holonomicity results.

\section*{Declaration of AI-assisted technologies}

During the preparation of this work the authors used Claude Opus versions 5 and  5.5 (Anthropic), a large language
model, over an extended series of working sessions in dialogue with the first author. Starting with a draft prepared by the authors, it was used in four ways:
\emph{(i) auditing}: reading successive drafts and reporting errors, gaps and inconsistencies
in statements and proofs, each of which was then checked by the authors;
\emph{(ii) drafting and reformulation}: proposing wording, structure and alternative proofs
for parts of the text, which the authors rewrote, accepted or discarded;
\emph{(iii) verification}: checking computations such as those in Examples \ref{ex:twobases}
and \ref{ex:pascal} by symbolic computation in SymPy;
\emph{(iv) literature search}: locating and summarizing references, all of which were checked
against the original sources by the authors.

\medskip
The problem, the framework and the antecedent
results are the authors' \cite{BM}. Throughout the dialogue the  authors directed the
line of inquiry, posed the questions, and repeatedly corrected
the tool. We record, as a matter of intellectual honesty
about this mode of work, that the tool produced a substantial number of incorrect
statements which were subsequently detected. The practical lesson we draw is that a draft
produced this way requires an audit at least as careful as a referee's, and that asking the
tool to audit its own output is productive but not sufficient.

\medskip

The authors have reviewed, corrected, and verified the mathematical statements in this text and claim full responsibility for all mathematical content.

\section*{Acknowledgements}

We acknowledge the support of our host institution Universidad Nacional de Colombia -- Sede Medell\'in. DBS acknowledges the support of Facultad de Ciencias' project ``Algunas conexiones de la teoría de Picard-Vessiot con la geometría y la din\'amica'' (hermes 67312).


\end{document}